\theoremstyle{plain}
\newtheorem{Thm}{Theorem}[section]
\newtheorem{Lem}[Thm]{Lemma}
\newtheorem{Prop}[Thm]{Proposition}
\newtheorem{Cor}[Thm]{Corollary}
\theoremstyle{definition}
\tikzstyle{vertex}=[circle, draw, inner sep=0pt, minimum size=6pt] 
\newcommand{\RRR}{\mathcal{R}} 
\title{On competition indices and periods of multipartite tournaments}
\author[1]{\small Ji-Hwan Jung}
\author[2]{\small Suh-Ryung Kim}
\author[2]{\small Hyesun Yoon}
\affil[1]{\footnotesize Center for Educational Research, Seoul National University, Seoul 08826}
\affil[2]{\footnotesize Department of Mathematics Education, Seoul National University, Seoul 08826}
\affil[ ]{\footnotesize\textit{jihwanjung@snu.ac.kr, srkim@snu.ac.kr, magisakura@snu.ac.kr}}
\date{}
\begin{document}
\maketitle
\begin{abstract}
In this paper, we compute competition indices and periods of multipartite tournaments.
We first show that the competition period of an acyclic digraph $D$ is one and $\zeta(D) +1$ is a sharp upper bound of the competition index of $D$ where $\zeta(D)$ is the sink elimination index of $D$.
Then we prove that, especially, for an acyclic $k$-partite tournament $D$, the competition index of $D$ is $\zeta(D)$ or $\zeta(D) +1$ for an integer $k \ge 3$.
By developing useful tools to create infinitely many directed walks in a certain regular pattern from given directed walks, we show that the competition period of a multipartite tournament with sinks and directed cycles is at most three.
We also prove that the competition index of a primitive digraph does not exceed its exponent.
\end{abstract}
\noindent
{\it Keywords.}
$m$-step competition graph, multipartite tournament, competition index, competition period, sink sequence, exponent of a primitive digraph

\smallskip
\noindent
{{{\it 2010 Mathematics Subject Classification.} 05C20, 05C75}}

\section{Introduction}
The underlying graph of each digraph in this paper is assumed to be simple unless otherwise mentioned.

Given a digraph $D$, the {\em competition graph} $C(D)$ of $D$
has the same vertex set as $D$ and has an edge between vertices $u$ and $v$
if and only if there exists a common prey of $u$ and $v$ in $D$. If $(u,v)$  is an arc of a digraph $D$,
then we call $v$ a {\em prey} of $u$ (in $D$) and call $u$ a {\em predator} of $v$ (in $D$).
The notion of competition graph is due to Cohen~\cite{cohen1968interval} and has arisen from ecology. Competition graphs also have applications in coding, radio transmission, and modeling of complex economic systems.  (See \cite{raychaudhuri1985generalized} and \cite{roberts1999competition} for a summary of these applications.)
Various variants of notion of competition graphs have been introduced and studied (see the survey articles by Kim~\cite{kim1993competition} and Lundgren~\cite{lundgren1989food} for the variations which have been defined and studied by many authors since Cohen introduced the notion of competition graph).

The notion of $m$-step competition graph is one of the important variants and is defined as follows.
Given a digraph $D$ and a positive integer $m$, a vertex $y$ is an {\em $m$-step prey} of a vertex $x$ if and only if there exists a directed walk from $x$ to $y$ of length $m$. Given a digraph $D$ and a positive integer $m$, the digraph $D^m$ has the vertex set same as $D$ and has an arc $(u,v)$ if and only if $v$ is an $m$-step prey of $u$.
Given a positive integer $m$, the {\em $m$-step competition graph} of a digraph $D$, denoted by $C^m(D)$, has the same vertex set as $D$ and has an edge between vertices $u$ and $v$ if and only if there exists an $m$-step common prey of $u$ and $v$. The notion of $m$-step competition graph is introduced by Cho~{\em et al.}~\cite{cho2000m} as a generalization of competition graph.
By definition, it is obvious that $C^1(D)$ for a digraph $D$ is the competition graph $C(D)$.
Since its introduction, it has been extensively studied (see for example \cite{belmont2011complete,cho2011competition,helleloid2005connected,ho2005m,kim2008competition,park2011m,zhao2009note}). Cho~{\em et al.}~\cite{cho2000m} showed that for any digraph $D$ and a positive integer $m$, $C^m(D)=C(D^m)$.

For the two-element Boolean algebra $\mathcal{B}=\{0,1\}$, $\mathcal{B}_n$ denotes the set of all $n \times n$ (Boolean) matrices over $\mathcal{B}$. Under the Boolean operations, we can define matrix
addition and multiplication in $\mathcal{B}_n$.
A graph $G$ is called the {\em row graph} of a matrix $A \in \mathcal{B}_n$ and denoted by $\RRR(A)$ if the rows of $A$ are the vertices of $G$, and two vertices are adjacent in $G$ if and only if their corresponding rows have a nonzero entry in the same column of $A$.
This notion was studied by Greenberg~{\em et al.}~\cite{greenberg1984inverting}. As noted in \cite{greenberg1984inverting}, the competition graph of a digraph $D$ is the row graph of its adjacency matrix.

Cho and Kim~\cite{cho2004competition} introduced the notions of competition index and competition period of $D$ for a strongly connected digraph $D$,
and Kim~\cite{kim2008competition} extended these notions to a general digraph $D$.
Consider the graph sequence $C^1(D)$, $C^2(D)$, $C^3(D), \ldots, C^m(D)$, $\ldots$ for a digraph $D$.
Note that for a digraph $D$ and its adjacency matrix $A$, the graph sequence $C^1(D)$, $C^2(D), \ldots, C^m(D)$, $\ldots$ is equivalent to the row graph sequence $\mathcal{R}(A)$, $\mathcal{R}(A^2), \ldots, \mathcal{R}(A^m), \ldots$.
Since the cardinality of the Boolean matrix set $\mathcal{B}_n$ is equal to a finite number $2^{n^2}$, there is a smallest positive integer $q$ such that $C^{q+i}(D)=C^{q+r+i}(D)$ equivalently $\mathcal{R}(A^{q+i})=\mathcal{R}(A^{q+r+i})$ for some positive integer $r$ and all nonnegative integers $i$.
Such an integer $q$ is called the \emph{competition index} of $D$ and is denoted by cindex$(D)$.
For $q=$cindex$(D)$, there is also a smallest positive integer $p$ such that $C^{q}(D)=C^{q+p}(D)$ equivalently $\mathcal{R}(A^{q})=\mathcal{R}(A^{q+p})$.
Such an integer $p$ is called the \emph{competition period} of $D$ and is denoted by cperiod$(D)$.
Refer to \cite{kim2010generalized, kim2015characterization, kim2012bound} for some results of competition indices and competition periods of digraphs.

Eoh {\it et al.}~\cite{eoh2020m} studied the $m$-step competition graphs of orientations of complete bipartite graphs for an integer $m \ge 2$.
They introduced a notion of sink sequences of digraphs, which played a key role in the paper.
Given a digraph $D$, we call a vertex of outdegree zero a \emph{sink} in $D$. We define a nonnegative integer $\zeta(D)$ and sequences
\[(W_0, W_1, \ldots, W_{\zeta(D)}) \quad \mbox{and} \quad (D_0, D_1, \ldots, D_{\zeta(D)}) \]
 of subsets of $V(D)$ and subdigraphs of $D$, respectively, as follows.
Let $D_0=D$ and $W_0$ be the set of sinks in $D$.
If $W_0 = V(D)$ or $W_0 = \emptyset$, then let $\zeta(D)=0$.
Otherwise, let $D_1 = D_0-W_0$ and let $W_1$ be the set of sinks in $D_1$.
If $W_1 = V(D_1)$ or $W_1 = \emptyset$, then let $\zeta(D)=1$.
Otherwise, let $D_2 = D_1-W_1$ and let $W_2$ be the set of sinks in $D_2$.
If $W_2 = V(D_2)$ or $W_2 = \emptyset$, then let $\zeta(D)=2$.
We continue in this way until we obtain $W_k=V(D_k)$ or $W_k =\emptyset$ for some nonnegative integer $k$.
Then we let $\zeta(D)=k$.
By definition, $0 \le \zeta(D) \le |V(D)|-1$.
We call $\zeta(D)$ the \emph{sink elimination index} of $D$, the sequence $(W_0, W_1, \ldots, W_{\zeta(D)})$ the \emph{sink sequence} of $D$, and the sequence $(D_0, D_1, \ldots, D_{\zeta(D)})$ the \emph{digraph sequence associated with the sink sequence} of $D$.

In this paper, we study competition indices and competition periods of multipartite tournaments in terms of sink sequences of digraphs.
A \emph{$k$-partite tournament} is an orientation of a complete $k$-partite graph for a positive integer $k$.
In particular, if $k \ge 2$, then we call it a \emph{multipartite tournament}.

Two vertices $u$ and $v$ in a digraph $D$ are
said to be {\em strongly connected} if there are directed walks from
$u$ to $v$ and from $v$ to $u$. We say that a digraph $D$ is {\em
strongly connected} if each pair in $V(D)$ is strongly connected. A
digraph $D$ is said to be {\em primitive} if $D$ is strongly
connected and the greatest common divisor of lengths of its directed
cycles is equal to $1$. It is known \cite{brualdi1991combinatorial} that
if a digraph $D$ is primitive, then there exists a positive integer
$t$ such that there is a directed walk of length exactly $t$ from each vertex
$u$ to each vertex $v$ (possibly $u=v$). The smallest integer $t$ is called the {\it exponent of the
primitive digraph $D$} and it is denoted by $\exp(D)$.

In Section~\ref{sec:acyclic}, we deal with acyclic multipartite tournaments.
We show that the competition period of an acyclic digraph $D$ is one and $\zeta(D) +1$ is a sharp upper bound of the competition index of $D$ (Theorem~\ref{acyclic-digraph-properties}).
Especially, it turns out that the competition index of an acyclic $k$-partite tournament $D$ is $\zeta(D)$ or $\zeta(D) +1$ for an integer $k \ge 3$ (Theorem~\ref{Thm:acyclic k-partite tournament}).
In Section~\ref{sec:directed cycle}, we handle multipartite tournaments with sinks and directed cycles.
We introduce types of a directed walk and types of a vertex in $\bigcup_{i=0}^{\zeta(D)-1}W_i$ where $\left(W_0,\ldots ,W_{\zeta(D) }\right)$ is the sink sequence of a multipartite tournament $D$ with sinks and directed cycles, and then show that each vertex in $\bigcup_{i=0}^{\zeta(D)-1}W_i$ is of Type~1 or Type~2 (Theorem~\ref{thm:34cycle}).
We show that the existence of $(u,w)$-directed walk of Type~1 or Type~2 of a certain length for a vertex $u$ in $D_{\zeta(D)}$ and $w \in \bigcup_{i=0}^{\zeta(D)-1}W_i$ guarantees the existence of a $(u,w)$-directed walk of length $m$ in $D$ for infinitely many integers $m$ in a specific form (Lemmas~\ref{lem:3cycle}-\ref{lem:both34}).
By integrating these results, we show that the competition period of a multipartite tournament with sinks and directed cycles is at most three (Theorem~\ref{thm:at most 3}).
In Section~\ref{sec:primitive}, we show that the competition index of a primitive digraph is at most its exponent (Theorem~\ref{thm:primitive digraph}).
In Section~\ref{sec:tournaments}, we take care of it to eventually compute the competition period and competition index of a tournament with sinks (Theorem~\ref{thm:tournament2}).

For a positive integer $n$, we denote the set $\{1,2,\ldots,n\}$ by $[n]$ for simplicity.

\section{Acyclic multipartite tournaments}\label{sec:acyclic}

In this section, we compute the competition period and competition index of an acyclic multipartite tournament.

\begin{Prop}[\cite{eoh2020m}]\label{prop:acyclic-digraph}
For a digraph $D$, the following are equivalent.
\begin{itemize}
  \item[(i)] $D$ is acyclic.
  \item[(ii)] $W_{\zeta(D)}=V\left(D_{\zeta(D)}\right)\neq\emptyset$.
  \item[(iii)] $\bigcup_{i=0}^{\zeta(D)}W_i=V(D)$.
\end{itemize}
\end{Prop}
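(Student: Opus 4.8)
The plan is to prove the three statements equivalent by establishing the cycle of implications (i) $\Rightarrow$ (iii) $\Rightarrow$ (ii) $\Rightarrow$ (i), exploiting the iterative construction of the sink sequence. The central observation driving everything is that the sink-elimination process terminates precisely when one of two mutually exclusive things happens: either we run out of vertices with $W_k = V(D_k)$ (meaning every remaining vertex is a sink, so the process has peeled off the whole digraph), or we hit a stage where $W_k = \emptyset$ (meaning $D_k$ is a nonempty digraph with no sinks at all). An acyclic digraph is exactly one that can never get stuck in the second, sinkless situation.

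For (i) $\Rightarrow$ (iii), I would argue that a nonempty acyclic digraph always has a sink: following any maximal directed path in a finite acyclic digraph, its terminal vertex must have outdegree zero, since an outgoing arc would either extend the path or create a cycle. Each subdigraph $D_i$ in the sequence is itself acyclic (removing vertices cannot create a cycle), so as long as $D_i$ is nonempty it has a sink, forcing $W_i \neq \emptyset$ for every $i$ up to termination. Hence the process cannot terminate by reaching an empty sink set, and it must instead terminate with $W_{\zeta(D)} = V(D_{\zeta(D)})$. Since at each step $D_{i+1} = D_i - W_i$, we have $V(D) = \bigsqcup_{i=0}^{\zeta(D)} W_i$ as a disjoint union once the process consumes everything, giving (iii). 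For (iii) $\Rightarrow$ (ii), note that by construction the $W_i$ for $i < \zeta(D)$ are proper nonempty subsets, and the sets $W_0, \ldots, W_{\zeta(D)}$ partition $\bigcup_{i=0}^{\zeta(D)} W_i$; if this union is all of $V(D)$, then $W_{\zeta(D)} = V(D) \setminus \bigcup_{i=0}^{\zeta(D)-1} W_i = V(D_{\zeta(D)})$, and this set is nonempty precisely because the union exhausts $V(D)$ while the earlier stages were proper. So (ii) holds.

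The implication (ii) $\Rightarrow$ (i) is the contrapositive-friendly direction and is where I expect the only real subtlety. I would prove it contrapositively: if $D$ is \emph{not} acyclic, then $D$ contains a directed cycle $Z$. No vertex of $Z$ is ever eliminated in the sink sequence, because a vertex lying on a directed cycle always has positive outdegree within the cycle, and this outgoing arc survives every deletion of $W_i$ (the sets $W_i$ consist only of sinks, and a cycle vertex is never a sink, so no vertex of $Z$ is removed at any stage, and the cycle stays intact in every $D_i$). Consequently $V(Z) \subseteq V(D_{\zeta(D)})$, yet every vertex of $Z$ has outdegree at least one in $D_{\zeta(D)}$, so none of them lies in $W_{\zeta(D)}$. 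This shows $W_{\zeta(D)} \neq V(D_{\zeta(D)})$, and since the process terminated at stage $\zeta(D)$, the termination must have been caused by $W_{\zeta(D)} = \emptyset$; either way $W_{\zeta(D)} \neq V(D_{\zeta(D)}) \neq \emptyset$ fails, negating (ii).

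The main obstacle, such as it is, lies in handling the termination condition carefully: the definition allows the process to stop for two distinct reasons, and one must verify that the invariant ``every $D_i$ with $i \le \zeta(D)$ is acyclic'' is preserved under vertex deletion and that cycle vertices are immune to elimination. I would make this rigorous by an easy induction showing that each $W_i$ for $i < \zeta(D)$ is a proper nonempty subset of $V(D_i)$, so that the sequence is well-defined, and that deleting sinks neither creates nor destroys directed cycles among the surviving vertices. Once the invariant that cycles persist is established, all three implications follow cleanly from the dichotomy in the stopping rule.
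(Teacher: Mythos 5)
Your proof is correct. The paper states this proposition without proof, importing it from Eoh et al.~\cite{eoh2020m}, so there is no in-paper argument to compare against; your cycle of implications (i)$\Rightarrow$(iii)$\Rightarrow$(ii)$\Rightarrow$(i), resting on the two facts that a nonempty acyclic digraph has a sink (terminal vertex of a maximal directed path) and that a vertex on a directed cycle keeps positive outdegree at every stage and hence is never eliminated, is the standard route, and every step checks out. (Only a slip of wording near the end: ``$W_{\zeta(D)}\neq V\left(D_{\zeta(D)}\right)\neq\emptyset$ fails'' should read ``the condition $W_{\zeta(D)}=V\left(D_{\zeta(D)}\right)\neq\emptyset$ fails.'')
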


The following lemma is a stronger version of Proposition 2.3 given by Eoh {\it et al.}~\cite{eoh2020m} in the sense that, regarding the set $\mathcal{L}$ of lengths of directed walks with an initial vertex in $W_i$, $\max{\mathcal{L}} \le i$ is replaced with $\mathcal{L}=\{0,\ldots,i\}$.
As a matter of fact, their proof asserted this stronger version.


\begin{Lem}[Restatement of Proposition 2.3 in \cite{eoh2020m}]\label{lem:walk-length}
Let $D$ be a digraph with $\zeta(D)\ge1$ and $(W_0,\ldots,W_{\zeta(D)})$ be the sink sequence of $D$.
Then, for each $i=0,\ldots,\zeta(D)-1$ and each vertex $v$ in $W_i$, among the directed walks starting from $v$, there exist directed walks of lengths $0, \ldots,i$ and no directed walks of length greater than $i$.
Furthermore, if $D$ is acyclic, then the statement is true for even $i=\zeta(D)$.
\end{Lem}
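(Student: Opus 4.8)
The plan is to prove this by induction on $i$, exploiting the recursive structure of the sink-elimination process. The key observation is that a vertex $v \in W_i$ is a sink in the digraph $D_i = D_{i-1} - W_{i-1}$, so every arc leaving $v$ in $D$ must go into the removed set $\bigcup_{j=0}^{i-1} W_j$; moreover, since $v$ is \emph{not} a sink of $D$ itself (as $i \le \zeta(D)-1$ forces $W_i \neq V(D_i)$ and the sink sequence keeps removing nonempty sink sets), $v$ has positive outdegree in $D$, and every out-neighbor lies in some $W_j$ with $j < i$. This monotone ``descent'' in the index is the engine of the whole argument: following any arc out of $W_i$ lands us in $W_j$ with $j \le i-1$.

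First I would establish the base case $i=0$: a vertex $v \in W_0$ is a sink in $D$, so the only directed walk starting at $v$ is the trivial walk of length $0$, giving $\mathcal{L} = \{0\}$, exactly as claimed. For the inductive step, assume the statement holds for all indices less than $i$ and take $v \in W_i$ with $1 \le i \le \zeta(D)-1$. The length-$0$ walk always exists. To produce walks of each length $1, \ldots, i$, I would argue as follows. Since $v$ is a sink in $D_i$ but not in $D$, it has at least one out-neighbor $u$, and every such $u$ lies in some $W_j$ with $j \le i-1$. The crucial sub-claim is that $v$ has an out-neighbor in $W_{i-1}$ specifically: this should follow from the definition of the sink sequence, since if all out-neighbors of $v$ lay in $\bigcup_{j=0}^{i-2} W_j$, then $v$ would already have been a sink in $D_{i-1}$ and hence removed into $W_{i-1}$, contradicting $v \in W_i$. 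Granting this, pick $u \in W_{i-1}$ adjacent out of $v$; by the induction hypothesis applied to $u$, there are walks from $u$ of every length $0, 1, \ldots, i-1$, and prepending the arc $(v,u)$ yields walks from $v$ of every length $1, 2, \ldots, i$. Combined with the trivial walk, this gives all of $\{0, 1, \ldots, i\}$.

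For the upper bound --- that no walk from $v$ has length exceeding $i$ --- I would argue by a separate induction (or fold it into the same one): any walk of positive length from $v \in W_i$ starts with an arc into some $W_j$, $j \le i-1$, and by the inductive hypothesis the remaining walk from that out-neighbor has length at most $i-1$, so the whole walk has length at most $i$. The base case $i=0$ again gives length $0$. Finally, for the acyclic case with $i = \zeta(D)$, Proposition~\ref{prop:acyclic-digraph} gives $W_{\zeta(D)} = V(D_{\zeta(D)}) \neq \emptyset$, meaning the vertices of $W_{\zeta(D)}$ are sinks of $D_{\zeta(D)}$; the same descent argument applies since every out-neighbor in $D$ again lands in $\bigcup_{j=0}^{\zeta(D)-1} W_j$, and the existence of an out-neighbor in $W_{\zeta(D)-1}$ follows identically, so walks of all lengths $0, \ldots, \zeta(D)$ and none longer exist.

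The main obstacle I anticipate is the sub-claim that $v \in W_i$ necessarily has an out-neighbor in $W_{i-1}$ (not merely somewhere in $\bigcup_{j<i} W_j$); without it I only get \emph{some} walk reaching down to level $0$, but not a walk of every intermediate length. I expect this to hinge tightly on the minimality built into the sink-elimination process --- a vertex survives to stage $i$ precisely because it was not a sink at stage $i-1$, which forces a surviving out-arc at that stage, i.e.\ an arc into $W_{i-1}$. Making this precise, and checking it remains valid in the acyclic boundary case $i = \zeta(D)$, is where I would spend the most care.
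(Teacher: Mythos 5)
Your proof is correct. The paper itself gives no proof of this lemma---it is stated as a restatement of Proposition~2.3 of the cited reference, with the remark that the proof there already establishes the stronger version---so there is nothing internal to compare against; your induction on $i$, with the key sub-claim that a vertex of $W_i$ must have an out-neighbor in $W_{i-1}$ (it survived to stage $i$ precisely because it was not a sink of $D_{i-1}$, and all its out-neighbors lie in $\bigcup_{j<i}W_j$, so the surviving out-arc lands in $W_{i-1}$), is exactly the natural argument and handles both the existence of all lengths $0,\ldots,i$, the upper bound, and the acyclic boundary case $i=\zeta(D)$ correctly.
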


\begin{Thm}\label{acyclic-digraph-properties}
  Let $D$ be an acyclic digraph and $\left(W_0,\ldots,W_{\zeta(D)}\right)$ be its sink sequence.
   Then, for $\zeta(D)\ge1$, we have the following:
  \begin{itemize}
    \item[(i)] $C^m(D)$ is an empty graph for any integer $m>\zeta(D)$;
    \item[(ii)] {\rm cperiod}$(D)=1$;
    \item[(iii)] if $|W_i|=1$ for some integer $i\in\{0,\ldots,{\zeta(D)}-1\}$, then $C^{\zeta(D)}(D)$ is the union of the complete graph with the vertex set $W_{\zeta(D)}$ and the empty graph with the vertex set $V(D)\setminus W_{\zeta(D)}$;
    \item[(iv)] {\rm cindex}$(D)\le\zeta(D)+1$ where the equality holds if $|W_{\zeta(D)}|>|W_i|$ for some integer $i\in\{0,\ldots,{\zeta(D)}-1\}$.
  \end{itemize}
\end{Thm}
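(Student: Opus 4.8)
The plan is to extract one structural fact from Lemma~\ref{lem:walk-length} and Proposition~\ref{prop:acyclic-digraph} and read off all four parts from it. Since $v\in W_a$ means $v$ is a sink in $D_a=D-(W_0\cup\cdots\cup W_{a-1})$, every out-neighbour of $v$ lies in $W_0\cup\cdots\cup W_{a-1}$; that is, each arc strictly lowers the part-index of its tail. Hence along any directed walk $v_0\to v_1\to\cdots\to v_\ell$ with $v_0\in W_a$ the part-index drops by at least one at each step. Combining this with Lemma~\ref{lem:walk-length} (which in the acyclic case covers $W_{\zeta(D)}$ as well), I would establish the key claim: (a) only the vertices of $W_{\zeta(D)}$ admit a directed walk of length $\zeta(D)$; and (b) for $u\in W_{\zeta(D)}$ every directed walk of length $\zeta(D)$ descends exactly one part-index per step, so at step $\zeta(D)-i$ it sits in $W_i$ and it terminates at a sink in $W_0$. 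In particular each $u\in W_{\zeta(D)}$ has, for every $i$, a directed walk of length $\zeta(D)-i$ ending in $W_i$.

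\textbf{Parts (i) and (ii).}
By Proposition~\ref{prop:acyclic-digraph}(iii) every vertex lies in some $W_i$ with $i\le\zeta(D)$, so by Lemma~\ref{lem:walk-length} no vertex is the tail of a directed walk of length $m>\zeta(D)$. Then no pair has an $m$-step common prey and $C^m(D)$ is empty for $m>\zeta(D)$, giving (i). Consequently the sequence $C^1(D),C^2(D),\ldots$ is eventually constant (equal to the empty graph from $m=\zeta(D)+1$ on), so its period is $1$, proving (ii); the same stabilisation shows the sequence is constant from $m=\zeta(D)+1$, whence $\mathrm{cindex}(D)\le\zeta(D)+1$, the upper bound in (iv).

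\textbf{Parts (iii) and the equality in (iv).}
Here I would use the preliminary observation to build common preys. For (iii), if $W_i=\{w\}$ then by (b) every length-$\zeta(D)$ walk out of any $u\in W_{\zeta(D)}$ passes through $w$ at step $\zeta(D)-i$; fixing once and for all a length-$i$ walk from $w$ to a sink $y$ (it exists by Lemma~\ref{lem:walk-length}) and prefixing it with a length-$(\zeta(D)-i)$ walk from $u$ to $w$ yields a length-$\zeta(D)$ walk from $u$ to $y$. Thus $y$ is a common $\zeta(D)$-step prey of every pair in $W_{\zeta(D)}$, so $C^{\zeta(D)}(D)$ is complete on $W_{\zeta(D)}$, while by (a) all other vertices are isolated, which is (iii). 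For the equality in (iv), assigning to each $u\in W_{\zeta(D)}$ the $W_i$-endpoint of one of its length-$(\zeta(D)-i)$ walks defines a map $W_{\zeta(D)}\to W_i$; when $|W_{\zeta(D)}|>|W_i|$ it is not injective, so two distinct vertices reach a common $w\in W_i$ and, concatenating with a length-$i$ walk from $w$ to a sink, share a $\zeta(D)$-step prey. Hence $C^{\zeta(D)}(D)$ has an edge and differs from the empty graph $C^{\zeta(D)+1}(D)$; together with the period-one conclusion of (ii) this forces $\mathrm{cindex}(D)=\zeta(D)+1$.

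\textbf{Main obstacle.}
The routine parts are (i), (ii) and the upper bound in (iv). The crux is the preliminary observation, specifically pinning down that a length-$\zeta(D)$ walk from $W_{\zeta(D)}$ descends exactly one level at each step and therefore meets every $W_i$: this is what lets me route two walks through a common intermediate vertex and then splice on a common tail of the exact length needed to total $\zeta(D)$. The pigeonhole step in (iv) is short, but it relies entirely on guaranteeing those length-$(\zeta(D)-i)$ walks into $W_i$ and on correctly translating ``$C^{\zeta(D)}(D)$ is nonempty'' into ``$\mathrm{cindex}(D)=\zeta(D)+1$'' through (ii).
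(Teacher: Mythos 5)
Your proposal is correct and follows essentially the same route as the paper: (i) and (ii) from Lemma~\ref{lem:walk-length} plus Proposition~\ref{prop:acyclic-digraph}, and (iii)/(iv) by routing length-$(\zeta(D)-i)$ walks from $W_{\zeta(D)}$ into $W_i$ (forced through the unique $w$ in (iii), via pigeonhole in (iv)) and splicing on a common length-$i$ tail to obtain a shared $\zeta(D)$-step prey. Your ``exact descent'' observation is just a repackaging of the paper's remark that every vertex of $W_j$ has an out-neighbour in $W_{j-1}$, so there is no substantive difference.
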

\begin{proof}
Take an integer $m>\zeta(D)$.
Since $D$ is acyclic, $\bigcup_{i=0}^{\zeta(D)}W_i=V(D)$ by Proposition \ref{prop:acyclic-digraph} and so no vertex in $D$ has an $m$-step prey in $D$ by Lemma \ref{lem:walk-length}.
Therefore $C^m(D)$ is an empty graph and so the statement (i) is true.
Then, by the definition of competition period, the statement (ii) is immediately true.

To show the statement (iii), suppose that $|W_i|=1$ for some integer $i\in\{0,\ldots,{\zeta(D)}-1\}$.
Let $W_i=\{w\}$.
Since every vertex in $W_j$ has at least one out-neighbor in $W_{j-1}$ for each integer $1 \le j \le \zeta(D)$, there is a directed walk of length $\zeta(D)-i$ from $v$ to $w$ for each vertex $v\in W_{\zeta(D)}$.
Moreover, there is a directed walk of length $i$ from $w$ to a vertex $u\in W_0$.
By concatenating those directed walks, we obtain a directed walk of length $\zeta(D)$ from each vertex in $W_{\zeta(D)}$ to $u$.
Thus $W_{\zeta(D)}$ forms a clique in $C^{\zeta(D)}(D)$.
By Proposition~\ref{prop:acyclic-digraph}(iii), every vertex in $V(D)\setminus W_{\zeta(D)}$ belongs to $W_j$ for some $j \in \{0, \ldots, \zeta(D)-1\}$.
Therefore every vertex in $V(D)\setminus W_{\zeta(D)}$ is isolated in $C^{\zeta(D)}(D)$ by Lemma \ref{lem:walk-length}.
Thus the statement (iii) is true.

The inequality {\rm cindex}$(D)\le\zeta(D)+1$ immediately follows from (i).
To figure out when the equality holds, suppose that $|W_{\zeta(D)}|>|W_i|$ for some integer $i\in\{0,\ldots,{\zeta(D)}-1\}$.
As we have observed above, there are at least $|W_{\zeta(D)}|$ directed walks starting from distinct vertices in $W_{\zeta(D)}$ to a vertex in $W_{i}$.
Since $|W_{\zeta(D)}|>|W_i|$, there are at least
two directed walks terminating at the same vertex in $W_i$ by the pigeonhole principle.
Then the origins of those directed walks form a clique in $C^{\zeta(D)}(D)$.
Thus $C^{\zeta(D)}(D)$ is not an empty graph.
Hence, by the definition of competition index and (i), {\rm cindex}$(D)=\zeta(D)+1$.
\end{proof}

It is easy to check that an acyclic multipartite tournament has sink elimination index $\zeta(D)\ge1$.

\begin{Thm}\label{k-partite tournament if and only if conditon}
  For an integer $k\ge2$, let $D$ be an acyclic $k$-partite tournament with a $k$-partition $(V_1,\ldots,V_k)$. Then $\left(W_0,\ldots,W_{\zeta(D)}\right)$ is the sink sequence of $D$ if and only if $(W_0,\ldots,W_{\zeta(D)})$ is a partition of $V(D)$ satisfying the following:
  \begin{itemize}
   \item[(i)] for each $i = 0,\ldots,\zeta(D)$, $W_i$ is a subset of a partite set of $D$;
   \item[(ii)] if there is an arc from a vertex in $W_j$ to a vertex in $W_i$ for some $i,j \in \{0,\ldots, \zeta(D)\}$, then $i < j$ and $W_i$ and $W_j$ are included in different partite sets.
 %
   \item[(iii)] for each $i = 0,\ldots,\zeta(D)-1$,
   there is an arc from each vertex in $W_{i+1}$ to each vertex in $W_i$.
  \end{itemize}
\end{Thm}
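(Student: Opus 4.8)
The plan is to prove the two implications separately, using throughout the defining feature of a $k$-partite tournament: two distinct vertices are adjacent (by exactly one arc) precisely when they lie in different partite sets, and are non-adjacent when they lie in the same partite set. Write $D_i = D - \bigcup_{j<i} W_j$ for the digraph sequence associated with the sink sequence, so that $W_i$ is the set of sinks of $D_i$.

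For the forward direction, assume $(W_0, \ldots, W_{\zeta(D)})$ is the sink sequence of $D$. That it is a partition of $V(D)$ is immediate: the $W_i$ are pairwise disjoint by construction and their union is $V(D)$ by Proposition~\ref{prop:acyclic-digraph}(iii). For (i), note that any two vertices $u, v \in W_i$ are sinks of $D_i$; if they lay in different partite sets they would be joined by an arc of $D$, which survives in $D_i$ and would give one of them an out-neighbor in $D_i$, contradicting sinkhood. Hence $W_i$ lies in a single partite set. For (ii), suppose there is an arc from $w_j \in W_j$ to $w_i \in W_i$. Adjacency forces $w_i, w_j$ into different partite sets, so by (i) the sets $W_i, W_j$ are included in different partite sets. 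Moreover $w_j$ is a sink of $D_j$ and so has no out-neighbor there; since $(w_j, w_i)$ is an arc, $w_i$ must already have been deleted before forming $D_j$, i.e.\ $w_i \in \bigcup_{t<j} W_t$, which gives $i < j$.

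The step I expect to be the main obstacle is (iii), because it demands an arc to \emph{every} vertex of $W_i$, not merely to one. Here I would first observe that each $w_{i+1} \in W_{i+1}$ lies in $D_i$ but is not a sink of $D_i$ (it is not in $W_i$), so it has an out-neighbor in $D_i$; since $w_{i+1}$ \emph{is} a sink of $D_{i+1} = D_i - W_i$, every such out-neighbor lies in $W_i$. This single arc from $w_{i+1}$ into $W_i$ forces, via (i), the partite set containing $W_{i+1}$ to differ from the one containing $W_i$. By completeness of the underlying multipartite graph, $w_{i+1}$ is then adjacent to \emph{every} vertex of $W_i$; and each such arc must be directed from $w_{i+1}$ into $W_i$, since an arc in the reverse direction would run from $W_i$ to $W_{i+1}$ and, by (ii), force $i+1 < i$. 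As $w_{i+1}$ was arbitrary, (iii) follows.

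For the converse, assume $(W_0, \ldots, W_\ell)$ is a partition satisfying (i)--(iii); I would show by induction on $s$ that $W_s$ is exactly the set of sinks of $D - \bigcup_{j<s} W_j$, so that the partition coincides with the sink sequence and, in particular, $\ell = \zeta(D)$. In the inductive step, every vertex of $W_s$ has all its out-neighbors in $\bigcup_{t<s} W_t$ by (ii), hence becomes a sink after those sets are removed; conversely a sink of $D - \bigcup_{j<s} W_j$ lying in some $W_t$ with $t > s$ would, by (iii), have an out-neighbor in the nonempty set $W_{t-1} \subseteq V\!\left(D - \bigcup_{j<s} W_j\right)$, a contradiction, so it lies in $W_s$. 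Finally, for $s < \ell$ the set $W_s$ is a proper nonempty subset of the remaining vertex set (since $W_\ell \neq \emptyset$ still survives), so the elimination does not terminate early; whereas at $s = \ell$ the remaining digraph has vertex set $W_\ell$ with every vertex a sink, matching the stopping rule $W_\ell = V(D_\ell)$. Hence $\zeta(D) = \ell$ and the two sequences agree.
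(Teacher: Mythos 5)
Your proposal is correct and follows essentially the same route as the paper: the forward direction derives (i) from the non-adjacency of sinks in $D_i$, (ii) from the elimination order, and (iii) from completeness of the underlying multipartite graph, while the converse shows inductively that each $W_s$ is exactly the sink set of $D-\bigcup_{j<s}W_j$. You are in fact slightly more explicit than the paper in two spots it glosses over, namely why an arc from $W_{i+1}$ into $W_i$ must exist and why every arc between those two sets is forced to point toward $W_i$, and in checking the stopping rule at $s=\ell$.
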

\begin{proof} Suppose that $\left(W_0,\ldots,W_{\zeta(D)}\right)$ is the sink sequence of $D$.
Let $(D_0,\ldots,D_{\zeta(D)})$ be the digraph sequence associated with it.
Suppose, to the contrary, that there are two vertices $u,v\in W_i$ for some $i\in\{0,\ldots,\zeta(D)\}$ such that $u\in V_p$ and $v\in V_q$ with $p\neq q$.
Since $D$ is a $k$-partite tournament, $(u,v)$ or $(v,u)$ is an arc in $D$.
Without loss of generality, we may assume that $(u,v)$ is an arc in $D$.
By the definition of $D_i$, $(u,v)$ is an arc in $D_i$, which contradicts the assumption that $u\in W_i$.
Thus, for each integer $0 \le i \le \zeta(D)$, $W_i\subseteq V_s$ for some $s\in[k]$ and so the statement (i) is true.

Suppose there is an arc from a vertex $W_j$ to a vertex $W_i$ for some $i,j\in\{0,\ldots,\zeta(D)\}$.
Then, by the definition of sink sequence, $i<j$.
By (i), $W_i \subseteq V_p$ and $W_j \subseteq V_q$ for some $p, q \in [k]$.
Since an arc goes from $W_j$ to $W_i$, $p \neq q$ and so the statement (ii) is true.

By definition, $W_i$ and $W_{i+1}$ are not empty sets for each $i = 0,\ldots,\zeta(D)-1$, so there exists an arc from a vertex in $W_{i+1}$ to a vertex in $W_i$.
Therefore by (ii), $W_i$ and $W_{i+1}$ are included in different partite sets.
Since $D$ is a multipartite tournament, there is an arc from each vertex in $W_{i+1}$ to each vertex in $W_i$.
Hence (iii) is true and so the `only if' part is valid.

Conversely, consider a partition $(W_0,\ldots,W_{\zeta(D)})$ of $V(D)$ satisfying (i), (ii), and (iii).
By (ii), every vertex in $W_0$ is a sink of $D$. Suppose that there is a sink $v$ of $D$ in $V(D)\setminus W_0$.
Then, since $\bigcup_{j=0}^{\zeta(D)}W_j = V(D)$, $v\in W_i$ for some integer $i\in[\zeta(D)]$.
By (iii), $W_i$ and $W_{i-1}$ are included in distinct partite sets. Since $D$ is a multipartite tournament and (ii) is true, there is an arc from $v$ to a vertex in $W_{i-1}$, which contradicts the choice of $v$.
Therefore $W_0$ is the set of sinks in $D$.
By applying a similar argument to $W_1$ of the subdigraph $D_1$ induced by $V(D)\setminus W_0$, we may show that $W_1$ is the set of sinks of $D_1$.
Inductively, we may show that $W_i$ is the set of sinks of the subdigraph of $D$ induced by $V(D) \setminus \bigcup_{j=0}^{i-1}W_j$ for $i=2,\ldots,\zeta(D)$.
Since $(W_0,\ldots,W_{\zeta(D)})$ is a partition of $V(D)$, we may conclude that $(W_0,\ldots,W_{\zeta(D)})$ is the sink sequence of $D$ and so the `if' part is true.
\end{proof}

\begin{Cor}\label{cor:zeta condition}
For an integer $k\ge2$, let $D$ be an acyclic $k$-partite tournament with a $k$-partition $(V_1,\ldots,V_k)$. If $\left(W_0,\ldots,W_{\zeta(D)}\right)$ is the sink sequence of $D$, then
$\zeta(D)\ge k-1$ where the equality holds if and only if $W_i$ is a partite set of $D$ for each $i=0,\ldots,\zeta(D)$;
\end{Cor}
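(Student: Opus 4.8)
The plan is to track, for each index $i$, the partite set that contains $W_i$. By Theorem~\ref{k-partite tournament if and only if conditon}(i), each $W_i$ is contained in a unique partite set, so I would define a map $f \colon \{0,\ldots,\zeta(D)\} \to [k]$ by letting $f(i)$ be the index $s$ with $W_i \subseteq V_s$. Since a $k$-partition consists of nonempty parts and $(W_0,\ldots,W_{\zeta(D)})$ is a partition of $V(D)$, every partite set $V_s$ contains a vertex, which must lie in some $W_i$, forcing $f(i)=s$; hence $f$ is surjective. A surjection from a $(\zeta(D)+1)$-element set onto a $k$-element set requires $\zeta(D)+1 \ge k$, which gives the asserted inequality $\zeta(D) \ge k-1$.

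For the equality case, I would first treat the forward direction. If $\zeta(D) = k-1$, then $f$ is a surjection between two sets of the same cardinality $k$, hence a bijection. The only subtle point---and the step I expect to require the most care---is upgrading ``contained in a partite set'' to ``equal to a partite set''. Fix $i$ and set $s = f(i)$. Any vertex of $V_s$ lies in some $W_j$ with $f(j) = s$ by Theorem~\ref{k-partite tournament if and only if conditon}(i); injectivity of $f$ forces $j = i$, so $V_s \subseteq W_i$, and combined with $W_i \subseteq V_s$ this yields $W_i = V_{f(i)}$, a partite set.

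Conversely, suppose each $W_i$ is a partite set. Because the $W_i$ partition $V(D)$, they are pairwise disjoint and nonempty, hence pairwise distinct partite sets; and because they cover $V(D) = V_1 \cup \cdots \cup V_k$, every $V_s$ must coincide with one of them. Thus $W_0,\ldots,W_{\zeta(D)}$ are precisely the $k$ partite sets, giving $\zeta(D)+1 = k$. (Alternatively, the inequality already established gives $\zeta(D)+1 \ge k$, while the $W_i$ being distinct partite sets gives $\zeta(D)+1 \le k$.) This completes the characterization, and the main work lies entirely in the bijectivity argument that converts set inclusions into set equalities.
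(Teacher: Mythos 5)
Your proof is correct and follows essentially the same route as the paper: both arguments combine Theorem~\ref{k-partite tournament if and only if conditon}(i) (each $W_i$ lies in a single partite set) with the fact that the $W_i$ cover $V(D)$ to get a counting/pigeonhole bound $\zeta(D)+1\ge k$. The only difference is presentational --- you formalize the count as a surjection $f$ and spell out the bijectivity argument for the equality case, which the paper dismisses as ``obvious.''
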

\begin{proof} Since $D$ is acyclic, $\bigcup_{i=0}^{\zeta(D)}W_i=V(D)$ by Proposition~\ref{prop:acyclic-digraph}.
 By Theorem~\ref{k-partite tournament if and only if conditon}(i), $W_i$ is a subset of a partite set of $D$ for each integer $0 \le i \le \zeta(D)$. If $\zeta(D)<k-1$, then there exists a partite set not containing $W_i$ for any $i\in\{0,\ldots,\zeta(D)\}$, which is impossible. Thus $\zeta(D)\ge k-1$. It is obvious that $\zeta(D)=k-1$ if and only if $\{W_0,\ldots,W_{\zeta(D)}\}=\{V_1,\ldots,V_k\}$.
\end{proof}
If $D$ is an acyclic $k$-partite tournament for an integer $k\geq 3$, then $\zeta(D) \ge 2$ by Corollary~\ref{cor:zeta condition} and we have the following result.
\begin{Cor}\label{lem:three are different}
 Let $D$ be an acyclic $k$-partite tournament for an integer $k\ge3$ and let $\left(W_0,\ldots,W_{\zeta(D)}\right)$ be the sink sequence of $D$.
   Then there is an integer $i\in\{0,\ldots,\zeta(D)-2\}$ such that $W_{i}$ and $W_{i+2}$ are included in different partite sets.
\end{Cor}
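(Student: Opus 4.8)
The plan is to argue by contradiction, exploiting the alternating structure that Theorem~\ref{k-partite tournament if and only if conditon} forces on the sink sequence. First I would note that the statement is not vacuous: since $k\ge3$, Corollary~\ref{cor:zeta condition} gives $\zeta(D)\ge k-1\ge2$, so the index set $\{0,\ldots,\zeta(D)-2\}$ is nonempty. By Theorem~\ref{k-partite tournament if and only if conditon}(i) each $W_i$ lies in a single partite set, so I may write $W_i\subseteq V_{s(i)}$ for a suitable index $s(i)\in[k]$.

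The key preliminary observation is that consecutive members of the sequence lie in \emph{different} partite sets. Indeed, by Theorem~\ref{k-partite tournament if and only if conditon}(iii) there is an arc from $W_{i+1}$ to $W_i$, and then Theorem~\ref{k-partite tournament if and only if conditon}(ii) forces $s(i)\ne s(i+1)$ for every $i\in\{0,\ldots,\zeta(D)-1\}$.

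Now suppose, to the contrary, that $W_i$ and $W_{i+2}$ lie in the same partite set for every $i\in\{0,\ldots,\zeta(D)-2\}$, that is, $s(i)=s(i+2)$ throughout. A straightforward induction on the index then shows that the sequence $s(0),s(1),\ldots,s(\zeta(D))$ alternates between only the two values $s(0)$ and $s(1)$, which are distinct by the previous paragraph. Hence every $W_i$ is contained in $V_{s(0)}\cup V_{s(1)}$. Invoking acyclicity through Proposition~\ref{prop:acyclic-digraph}, we have $\bigcup_{i=0}^{\zeta(D)}W_i=V(D)$, so $V(D)\subseteq V_{s(0)}\cup V_{s(1)}$. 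Since $D$ is a genuine $k$-partite tournament with $k\ge3$, all $k$ of its partite sets are nonempty, so there is a partite set $V_c$ with $c\notin\{s(0),s(1)\}$; this contradicts $V(D)\subseteq V_{s(0)}\cup V_{s(1)}$, and the contradiction produces the desired index $i$.

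The argument is essentially mechanical once the alternation is set up, so there is no serious obstacle; the only points needing care are the bookkeeping in the induction on $s(i)$ and the observation that it is precisely the nonemptiness of all $k\ge3$ partite sets (i.e.\ that the vertex set cannot be squeezed into only two partite sets) that drives the final contradiction.
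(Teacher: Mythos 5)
Your proposal is correct and follows essentially the same route as the paper: assume $W_i$ and $W_{i+2}$ always share a partite set, use Theorem~\ref{k-partite tournament if and only if conditon} to see the sequence alternates between two partite sets, and contradict Proposition~\ref{prop:acyclic-digraph}(iii) via the nonemptiness of a third partite set. The only difference is cosmetic: you spell out the alternation as an induction on $s(i)$, whereas the paper states the resulting union inclusions directly after fixing $W_0\subseteq V_1$ and $W_1\subseteq V_2$.
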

\begin{proof}
Let $(V_1,\ldots, V_k)$ be a $k$-partition of $D$.
By Theorem~\ref{k-partite tournament if and only if conditon}(iii), $W_{0}$ and $W_{1}$ are included in different partite sets.
Without loss of generality, we may assume that $W_{0}\subseteq V_1$ and $W_{1}\subseteq V_2$.
Suppose, to the contrary, that $W_{i}$ and $W_{i+2}$ are included in the same partite set for each integer $0 \le i \le \zeta(D)-2$.
Then
\begin{equation*}
\bigcup_{0 \le i \le {\zeta(D)}/2}{W_{2i}} \subseteq V_1 \quad \mbox{ and } \quad \bigcup_{0 \le i \le ({\zeta(D)}-1)/2}{W_{2i+1}} \subseteq V_2.
\end{equation*}
Therefore $\bigcup_{i=0}^{\zeta(D)}W_i\subseteq V_1\cup V_2$.
Since $k\ge 3$, $V_3\neq \emptyset$.
Thus $\bigcup_{i=0}^{\zeta(D)}W_i\subsetneq V_1\cup V_2\cup V_3\subseteq V(D)$, which contradicts Proposition~\ref{prop:acyclic-digraph}(iii).
\end{proof}

\begin{Lem}\label{Lem:directed path}
  Let $D$ be an acyclic $k$-partite tournament for an integer $k\ge3$ and let $\left(W_0,\ldots,W_{\zeta(D)}\right)$ be the sink sequence of $D$.
  For each integer $1\le s \le \zeta(D)$, if there exist integers $p, q \in \{0,\ldots,s\}$ with $p>q$ such that $W_{p}$ and $W_{q}$ are included in different partite sets, then there exists a directed path of length $s-p+q+1$ from each vertex in $W_s$ to each vertex in $W_0$.
\end{Lem}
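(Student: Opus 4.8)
The plan is to realize the required directed path as a concatenation of three pieces: a downward path descending through consecutive sink layers from $W_s$ to $W_p$, a single shortcut arc jumping from $W_p$ directly to $W_q$, and a second downward path descending from $W_q$ to $W_0$. The three lengths will add up to $(s-p)+1+q = s-p+q+1$, which is exactly the claimed length.

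First I would secure the shortcut arc, which is the crux of the argument. Since $W_p$ and $W_q$ are included in different partite sets and $D$ is a $k$-partite tournament, for any $u\in W_p$ and $v\in W_q$ exactly one of $(u,v)$, $(v,u)$ is an arc of $D$. An arc directed from $W_q$ to $W_p$ would force $p<q$ by Theorem~\ref{k-partite tournament if and only if conditon}(ii), contradicting the hypothesis $p>q$; hence $(u,v)$ must be the arc. This shows there is an arc from each vertex of $W_p$ to each vertex of $W_q$.

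Next, I would fix an arbitrary $x\in W_s$ and an arbitrary $y\in W_0$ and build the path greedily. Using Theorem~\ref{k-partite tournament if and only if conditon}(iii), which supplies an arc from each vertex of $W_{i+1}$ to each vertex of $W_i$, I choose vertices $x=x_s, x_{s-1},\ldots, x_p$ with $x_i\in W_i$ and $(x_{i+1},x_i)$ an arc, descending from $W_s$ to $W_p$ in $s-p$ steps. I then follow the shortcut arc from $x_p$ to some $x_q\in W_q$, and descend once more through $x_q, x_{q-1},\ldots, x_1, x_0$ with $x_0=y$; the final choice is legitimate because $(x_1,y)$ is an arc by (iii). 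Concatenating these yields a directed walk from $x$ to $y$ of length $(s-p)+1+q=s-p+q+1$, and since $x$ and $y$ were arbitrary this covers every pair.

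Finally I would verify that this walk is genuinely a path. Because $(W_0,\ldots,W_{\zeta(D)})$ is a partition of $V(D)$, vertices selected from distinct layers are automatically distinct, and since $p>q$ the two index ranges $\{p,p+1,\ldots,s\}$ and $\{0,1,\ldots,q\}$ traversed by the two descents are disjoint, so no layer is revisited. Hence all the chosen $x_i$ are pairwise distinct and the walk is a directed path of the required length. The only genuinely delicate point is pinning down the orientation of the shortcut arc in the second step; the rest is a routine concatenation, so I expect that orientation argument to be the main (and fairly modest) obstacle.
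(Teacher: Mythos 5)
Your proposal is correct and follows essentially the same route as the paper's proof: both decompose the path as a descent of length $s-p$ through consecutive layers from $W_s$ to $W_p$, a single arc from $W_p$ to $W_q$ whose orientation is forced by Theorem~\ref{k-partite tournament if and only if conditon}(ii), and a descent of length $q$ from $W_q$ to $W_0$. Your explicit check that the result is a path (disjointness of the index ranges since $p>q$) is a detail the paper leaves implicit, but the argument is the same.
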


\begin{proof}
Fix $s\in \{1,\ldots,\zeta(D)\}$.
Suppose that there exist integers $p, q \in \{0,\ldots,s\}$ with $p>q$ such that $W_{p}$ and $W_{q}$ are included in different partite sets.
Take vertices $u\in W_{s}$ and $x\in W_0$.
Now we take two vertices $v\in W_{p}$ and $w \in W_q$ so that $u=v$ if $p=s$ and $w=x$ if $q=0$.
  Since $D$ is a multipartite tournament, every vertex in $W_{i-1}$ is an out-neighbor of each vertex in $W_i$ for each integer  $1\le i \le \zeta(D)$.
Therefore, there exist a $(u,v)$-directed path $P$ of length $s-p$ and a $(w,x)$-directed path $Q$ of length $q$.
Since $W_{p}$ and $W_{q}$ are included in different partite sets, $v$ and $w$ are linked by an arc.
Then, by Theorem~\ref{k-partite tournament if and only if conditon}(ii), $(v,w)$ is an arc of $D$.
Thus $P\to Q$ is a $(u,x)$-directed path of length $(s-p)+1+q$.
Since $u$ and $x$ were arbitrarily chosen, the statement is true.
\end{proof}

We recall that if $D$ is an acyclic $k$-partite tournament for an integer $k\geq 3$, then $\zeta(D) \ge 2$.

\begin{Thm}\label{Thm:acyclic k-partite tournament}
  For an integer $k\geq 3$, let $D$ be an acyclic $k$-partite tournament with the sink sequence $\left(W_0,\ldots ,W_{\zeta(D) }\right)$.
  Then the following are true:
  \begin{itemize}
    \item[(i)] $C^{\zeta(D)}(D)$ is the union of the complete graph with the vertex set $W_{\zeta(D)}$ and the empty graph with the vertex set $V(D)\setminus W_{\zeta(D)}$;
          \item[(ii)] $C^{\zeta(D)-1}(D)$ is the union of the complete graph with the vertex set $W_{\zeta(D)}\cup W_{\zeta(D)-1}$ and the empty graph with the vertex set $\bigcup _{i=0}^{\zeta(D)-2}W_{i}$;
%
    \item[(iii)] if $\left\vert W_{\zeta(D)}\right\vert \geq 2$, then ${\rm cindex}(D)=\zeta(D)+1$, otherwise ${\rm cindex}(D)=\zeta(D)$.
  \end{itemize}
\end{Thm}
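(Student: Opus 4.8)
The plan is to read each $C^m(D)$ directly off its definition: two vertices are adjacent in $C^m(D)$ precisely when they have a common $m$-step prey, i.e.\ when some single vertex $x$ admits directed walks of length $m$ from both of them. Isolation of vertices will come from Lemma~\ref{lem:walk-length}, which pins the lengths of walks leaving each $W_i$ down to exactly $\{0,1,\dots,i\}$, while the clique structure will come from the ``complete'' arc pattern between consecutive layers supplied by Theorem~\ref{k-partite tournament if and only if conditon}(iii). Throughout I would use that $\zeta(D)\ge 2$ (Corollary~\ref{cor:zeta condition}), so that $C^{\zeta(D)-1}(D)$ and the index $\bigcup_{i=0}^{\zeta(D)-2}W_i$ make sense.

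For (i), every vertex in $W_i$ with $i<\zeta(D)$ has no directed walk of length $\zeta(D)$ by Lemma~\ref{lem:walk-length}, hence no $\zeta(D)$-step prey, and so is isolated in $C^{\zeta(D)}(D)$; this simultaneously handles the empty part on $V(D)\setminus W_{\zeta(D)}$ and shows that no vertex of $W_{\zeta(D)}$ is adjacent to anything outside $W_{\zeta(D)}$. It remains to see that $W_{\zeta(D)}$ is a clique. Fixing $x\in W_0$ and a representative $w_j\in W_j$ for each $1\le j\le \zeta(D)-1$, Theorem~\ref{k-partite tournament if and only if conditon}(iii) lets me concatenate the arcs $u\to w_{\zeta(D)-1}\to\cdots\to w_1\to x$ into a directed walk of length $\zeta(D)$ from an arbitrary $u\in W_{\zeta(D)}$ to the fixed vertex $x$, so any two vertices of $W_{\zeta(D)}$ share the $\zeta(D)$-step prey $x$.

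Statement (ii) is the crux, and the only place where $k\ge 3$ is genuinely used. Isolation of $\bigcup_{i=0}^{\zeta(D)-2}W_i$ is again immediate from Lemma~\ref{lem:walk-length}. For the clique on $W_{\zeta(D)}\cup W_{\zeta(D)-1}$ I would split into cases according to which of the two top layers the chosen vertices lie in, and in each case route both to a common $x\in W_0$ by a walk of length exactly $\zeta(D)-1$. A vertex of $W_{\zeta(D)-1}$ descends to any prescribed vertex of $W_0$ in exactly $\zeta(D)-1$ steps via Theorem~\ref{k-partite tournament if and only if conditon}(iii). The obstacle is a vertex $u\in W_{\zeta(D)}$, whose straightforward descending walk has the wrong length $\zeta(D)$; here I invoke Corollary~\ref{lem:three are different} to fix $i$ with $W_i$ and $W_{i+2}$ in different partite sets, and then Lemma~\ref{Lem:directed path} with $s=\zeta(D)$, $p=i+2$, $q=i$ produces a directed path of length $\zeta(D)-(i+2)+i+1=\zeta(D)-1$ from $u$ to any prescribed vertex of $W_0$. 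Steering every vertex of both layers to the same $x\in W_0$ then gives the common $(\zeta(D)-1)$-step prey in all three cases. I expect this ``length-$(\zeta(D)-1)$ shortcut'' to be the main difficulty, precisely because it breaks for $k=2$, where $W_{\zeta(D)}$ and $W_{\zeta(D)-2}$ always lie in one partite set and carry no connecting arc.

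Finally, (iii) follows by combining (i) and (ii) with the facts that $C^m(D)$ is empty for $m>\zeta(D)$ and $\operatorname{cperiod}(D)=1$ (Theorem~\ref{acyclic-digraph-properties}). By (i), $C^{\zeta(D)}(D)$ is nonempty exactly when $|W_{\zeta(D)}|\ge 2$. If $|W_{\zeta(D)}|\ge 2$, then $C^{\zeta(D)}(D)$ carries an edge while $C^{\zeta(D)+1}(D)$ is empty, so the graph sequence first becomes constant at $\zeta(D)+1$ and $\operatorname{cindex}(D)=\zeta(D)+1$. If $|W_{\zeta(D)}|=1$, then $C^{\zeta(D)}(D)$ is already empty, whereas (ii) forces $C^{\zeta(D)-1}(D)$ to be nonempty since $W_{\zeta(D)}\cup W_{\zeta(D)-1}$ is a clique on at least two vertices; hence the sequence first becomes constant at $\zeta(D)$ and $\operatorname{cindex}(D)=\zeta(D)$.
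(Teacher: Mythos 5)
Your proposal is correct and follows essentially the same route as the paper: isolation via Lemma~\ref{lem:walk-length}, the clique in (i) from the layer-to-layer arcs of Theorem~\ref{k-partite tournament if and only if conditon}(iii), the length-$(\zeta(D)-1)$ shortcut in (ii) from Corollary~\ref{lem:three are different} combined with Lemma~\ref{Lem:directed path} at $s=\zeta(D)$, $p=i+2$, $q=i$, and (iii) by comparing emptiness of $C^{\zeta(D)-1}(D)$, $C^{\zeta(D)}(D)$, and $C^{m}(D)$ for $m>\zeta(D)$. The only cosmetic difference is that in (i) you build the length-$\zeta(D)$ walk by direct concatenation where the paper cites Lemma~\ref{Lem:directed path} with $p=1$, $q=0$.
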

\begin{proof} 
Take a vertex $z \in W_0$.
By Theorem~\ref{k-partite tournament if and only if conditon}(iii),
$W_0$ and $W_1$ are included in different partite sets.
Then, since $0 < 1 \le \zeta(D)-1$, by Lemma~\ref{Lem:directed path}, there exist
\begin{itemize}
  \item[(a)] a directed path of length $\zeta(D)$ from any vertex in $W_{\zeta(D)}$ to $z$ and
  \item[(b)] a directed path of length $\zeta(D)-1$ from any vertex in $W_{\zeta(D)-1}$ to $z$.
\end{itemize}
By (a), $W_{\zeta(D)}$ forms a clique in $C^{\zeta(D)}(D)$.
By Lemma \ref{lem:walk-length}, every vertex in $\bigcup _{i=0}^{\zeta(D)-1}W_{i}$ is isolated in $C^{\zeta(D)}(D)$.
By Proposition~\ref{prop:acyclic-digraph}, $\bigcup _{i=0}^{\zeta(D)-1}W_{i} = V(D)\setminus W_{\zeta(D)}$ and so the statement (i) is true.

Since $D$ is a $k$-partite tournament with $k \ge 3$, there is an integer $i \in \{0,\ldots, \zeta(D)-2\}$ such that  $W_{i}$ and $W_{i+2}$ are included in different partite sets by Corollary \ref{lem:three are different}.
Thus, by Lemma~\ref{Lem:directed path}, there is a directed path of length $\zeta(D)-1$ from each vertex in $W_{\zeta(D)}$ to $z$.
Hence, by (b), $z$ is a $(\zeta(D)-1)$-step common prey of each vertex in $W_{\zeta(D)} \cup W_{\zeta(D)-1}$ and so $W_{\zeta(D)} \cup W_{\zeta(D)-1}$ forms a clique in $C^{\zeta(D)-1}(D)$.
By Lemma \ref{lem:walk-length}, every vertex in $\bigcup _{i=0}^{\zeta(D)-2}W_{i}$ is isolated in $C^{\zeta(D)-1}(D)$ and so, by Proposition~\ref{prop:acyclic-digraph}, the statement (ii) is true.

Suppose $\left\vert W_{\zeta(D)}\right\vert \geq 2$.
Then, by the statement (i), $C^{\zeta(D)}(D)$ is not empty.
Since $C^m(D)$ is empty for each integer $m>\zeta(D)$ by Theorem~\ref{acyclic-digraph-properties}(i), ${\rm cindex}(D)$ is $\zeta(D)+1$.
Now suppose $\left\vert W_{\zeta(D)}\right\vert \le 1$.
Then, since $D$ is acyclic, $\left\vert W_{\zeta(D)}\right\vert = 1$
and so, by the statements (i) and (ii), $C^{\zeta(D)}(D)$ is empty and $C^{\zeta(D)-1}(D)$ is not empty, respectively.
Since $C^m(D)$ is empty for each integer $m>\zeta(D)$ by Theorem~\ref{acyclic-digraph-properties}(i), ${\rm cindex}(D)$ is $\zeta(D)$.
Therefore the statement (iii) is true.
\end{proof}

\section{Multipartite tournaments with sinks and directed cycles}\label{sec:directed cycle}

If a digraph $D$ is acyclic, then $C^m(D)$ is empty for any integer $m > \zeta(D)$ and so the competition period of $D$ is $1$ (Theorem~\ref{acyclic-digraph-properties}).
In addition, Cho and Kim~\cite{cho2004competition} showed that a digraph without sinks has competition period $1$.
In this vein, this section studies the competition period of a multipartite tournament having a sink and a directed cycle.
By the way, Eoh {\it et al.}~\cite{eoh2020m} showed that if $C^M(D)$ is an empty graph for a digraph $D$ and a positive integer $M$, then so is $C^m(D)$ for any positive integer $m \ge M$.
Therefore, if $C^M(D)$ is an empty graph for a digraph $D$ and a positive integer $M$, then the competition period of $D$ is $1$.
As a matter of fact, in the case of a multipartite tournament $D$, having a sink and a directed cycle guarantees that $C^m(D)$ is not empty for every positive integer $m$ by the following proposition.

\begin{Prop}\label{Prop:notempty}
  If a multipartite tournament $D$ has a sink and a directed cycle, then $C^m(D)$ is not empty for every positive integer $m$.
\end{Prop}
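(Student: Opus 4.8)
The plan is to exhibit, for every positive integer $m$, two distinct vertices of $D$ sharing a common $m$-step prey, and the natural candidate for that common prey is the sink itself. Fix a sink $w$ of $D$ and let $V_s$ be the partite set containing $w$. Since $w$ has outdegree zero, it cannot lie on any directed cycle; moreover, for every vertex $x$ outside $V_s$ the unique arc joining $x$ and $w$ must be $(x,w)$, so \emph{every} vertex of $D$ outside $V_s$ is a predator of $w$. Let $C\colon c_0\to c_1\to\cdots\to c_{\ell-1}\to c_0$ be a directed cycle of $D$; as $D$ is an orientation of a complete multipartite graph, distinct vertices are joined by at most one arc, so $D$ has no directed cycle of length two and hence $\ell\ge 3$.

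First I would pin down how many cycle vertices can be used to step into $w$. Writing $P=\{\,j : c_j\in V_s\,\}$, no two cyclically consecutive indices can lie in $P$, since consecutive vertices of $C$ are joined by an arc and therefore lie in different partite sets. Thus $P$ is an independent set of the cycle on $\ell$ vertices, so $|P|\le\lfloor \ell/2\rfloor$ and at least $\lceil \ell/2\rceil\ge 2$ of the cycle vertices lie outside $V_s$; each such vertex is a predator of $w$ by the previous paragraph. This is the combinatorial input that guarantees the existence of two competitors.

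Next I would use the cycle to realize a directed walk of \emph{exactly} the prescribed length. Given $m$, choose $r\in\{0,\ldots,\ell-1\}$ with $r\equiv m-1\pmod{\ell}$ and set $t=(m-1-r)/\ell\ge 0$. For any cycle vertex $c_a$ such that $c_{a+r}$ (indices mod $\ell$) lies outside $V_s$, I concatenate the length-$r$ directed path $c_a\to\cdots\to c_{a+r}$ along $C$, then $t$ full loops around $C$ based at $c_{a+r}$ (contributing length $t\ell$), and finally the arc $(c_{a+r},w)$; this is a directed walk from $c_a$ to $w$ of length $r+t\ell+1=m$. Since there are at least two indices $j$ with $c_j\notin V_s$, picking two such $j_1\ne j_2$ and setting $a_i=j_i-r$ yields two distinct cycle vertices $c_{a_1},c_{a_2}$, each admitting a directed walk of length exactly $m$ to $w$. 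Hence $w$ is an $m$-step common prey of $c_{a_1}$ and $c_{a_2}$, so $c_{a_1}c_{a_2}$ is an edge of $C^m(D)$ and $C^m(D)$ is nonempty.

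I expect the only genuine obstacle to be producing walks of \emph{every} length $m$ rather than only those in a single residue class modulo $\ell$: a single cycle on its own forces the length of a walk to its target to be fixed modulo $\ell$, which is precisely why two distinct competitors are needed and why the shift $a_i=j_i-r$ is the crux — it absorbs the residue of $m-1$ into the choice of entry vertex and the quotient into the number of loops. The inequality $\lceil \ell/2\rceil\ge 2$ is exactly what prevents this shift from collapsing the two competitors into a single vertex, so establishing that bound (equivalently, that at least two cycle vertices are predators of $w$) is the step I would treat most carefully.
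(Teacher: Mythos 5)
Your proof is correct and takes essentially the same approach as the paper: both arguments pick two cycle vertices outside the sink's partite set (each necessarily a predator of the sink), then shift the starting points back by $m-1$ steps along the cycle so that each reaches the sink by a walk of length exactly $m$. Your explicit independent-set count for why at least two cycle vertices avoid $V_s$, and the explicit $r,t$ decomposition of $m-1$, merely spell out what the paper leaves implicit in its choice of $v_{i-m+1}$ and $v_{j-m+1}$.
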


\begin{proof}
    Suppose that a $k$-partite tournament $D$ for an integer $k \ge 2$ has a sink $x$ and a directed cycle $C:=v_0v_1 \cdots v_{l-1} v_0$ for some integer $l \ge 3$.
    Let $(V_1,\ldots,V_k)$ be a $k$-partition of $D$.
  Without loss of generality, we may assume that $x \in   V_1$.
  Since $V_1$ is a partite set of $D$, there exist at least two vertices of $V(C) \setminus V_1$.
  Let $v_i$ and $v_j$ be vertices of $V(C)\setminus V_1$ for some two distinct integers $i,j \in \{0,1,\ldots,l-1\}$.
  Since $D$ is a $k$-partite tournament and $x$ is a sink, $(v_i,x)$ and $(v_j,x)$ are arcs of $D$.
  Then $v_i$ and $v_j$ are adjacent in $C(D)$.
  Since $v_i$ and $v_j$ are on $C$, $v_{i-m+1}$ and $v_{j-m+1}$ are adjacent in $C^m(D)$ for every positive integer $m$ (all the subscripts are reduced to modulo $l$).
  Hence $C^m(D)$ is not empty for every positive integer $m$.
\end{proof}

In the following, we shall show that if a multipartite tournament $D$ has a sink and a directed cycle, then the competition period of $D$ is at most three, which is our main result.
To do so, we need several theorems and lemmas.

\begin{Thm}[\cite{goddard1991multipartite}]\label{thm:goddard1991multipartite}
  Let $D$ be a $k$-partite tournament with $k\ge 3$.
  Then $D$ contains a directed cycle of length $3$ if and only if there exists a directed cycle in $D$ which contains vertices from at least three partite sets.
\end{Thm}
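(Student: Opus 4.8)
The plan is to treat the two implications separately, with essentially all the work in the converse. For the forward direction, suppose $D$ contains a directed $3$-cycle $u_1u_2u_3u_1$. Its three vertices are pairwise joined by arcs, and since arcs in a multipartite tournament only occur between distinct partite sets, $u_1,u_2,u_3$ lie in three different partite sets; thus this $3$-cycle is itself a directed cycle meeting at least three partite sets, and the ``only if'' part is immediate.

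For the converse, I would argue by minimality. Among all directed cycles of $D$ meeting at least three partite sets, choose one, $C=v_1v_2\cdots v_\ell v_1$, of minimum length, and aim to show $\ell=3$. The first step is a structural observation: consecutive vertices of $C$ always lie in distinct partite sets, so if every vertex $v_i$ had its two cycle-neighbors $v_{i-1}$ and $v_{i+1}$ in one common partite set, then passing from a vertex to the one two steps ahead would preserve the partite set, confining $C$ to at most two partite sets when $\ell$ is even and to a single set when $\ell$ is odd---either way contradicting that $C$ meets at least three. Hence some three consecutive vertices, which I relabel $v_1,v_2,v_3$, have $v_1$ and $v_3$ in different partite sets, so there is an arc between them.

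The second step splits on the orientation of this chord. If $(v_3,v_1)$ is an arc, then $v_1v_2v_3v_1$ is a directed $3$-cycle and we are done. Otherwise $(v_1,v_3)$ is an arc and $C'=v_1v_3v_4\cdots v_\ell v_1$ is a directed cycle of length $\ell-1$. If $\ell=4$ then $C'$ is already a $3$-cycle, and if $C'$ still meets at least three partite sets it contradicts the minimality of $C$.

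The main obstacle is the one remaining case, where $C'$ meets exactly two partite sets; here I expect to spend the most care. In that situation $v_2$ is the unique vertex of $C$ lying in some third partite set $P$, while the remaining vertices alternate between two sets $A$ and $B$ with, say, $v_1\in A$, $v_3\in B$, and $v_4\in A$; in particular $\ell\ge 5$, so $v_4$ exists and differs from $v_1$. I would resolve it with a second chord, this time between $v_2\in P$ and $v_4\in A$ (which are adjacent since $P\neq A$): if $(v_4,v_2)$ is an arc then $v_2v_3v_4v_2$ is a directed $3$-cycle, and if $(v_2,v_4)$ is an arc then $v_2v_4v_5\cdots v_\ell v_1v_2$ is a directed cycle of length $\ell-1$ that still meets all three of $P$, $A$, and $B$, again contradicting minimality. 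The only genuinely delicate point is verifying that the drop to two partite sets forces exactly this alternating structure, so that the second chord is guaranteed to close up either a triangle or a shorter admissible cycle; the rest is routine case-checking.
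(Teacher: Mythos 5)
Your argument is correct, but there is nothing in the paper to compare it against: Theorem~\ref{thm:goddard1991multipartite} is imported verbatim from Goddard, Kubicki, Oellermann and Tian \cite{goddard1991multipartite} and the paper gives no proof of it. Judged on its own, your proof is sound and complete. The forward direction is indeed immediate, since the three vertices of a directed $3$-cycle are pairwise joined by arcs and hence lie in three distinct partite sets. For the converse, the shortest-cycle argument works: your parity observation correctly shows that a cycle in which every pair $v_{i-1},v_{i+1}$ shares a partite set is confined to at most two partite sets (for odd $\ell$ it would even force consecutive vertices into the same set, which is impossible), so the chord between some $v_1$ and $v_3$ exists; the orientation $(v_3,v_1)$ closes a triangle, while $(v_1,v_3)$ yields the shorter cycle $C'$, and the only surviving case is that $C'$ meets exactly two partite sets $A,B$. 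There the structure is forced exactly as you say: $C'$ alternates between $A$ and $B$ (so $\ell$ is odd and $\ell\ge 5$), $v_2$ lies in a third set $P$, and the chord between $v_2$ and $v_4$ either closes the triangle $v_2v_3v_4v_2$ or produces the cycle $v_2v_4v_5\cdots v_\ell v_1v_2$ of length $\ell-1$, which still contains $v_2\in P$, $v_4\in A$ and $v_5\in B$, contradicting minimality. All degenerate cases ($\ell=3,4$; existence and distinctness of $v_4,v_5$) are covered. This is a clean, self-contained replacement for the citation.
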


Given a digraph $D$, we call a directed cycle of length at least $4$ in $D$, a \textit{directed hole} if it is an induced subdigraph of $D$.

We note that a directed hole of length $4$ in a multipartite tournament $D$ is of the form $v_0 \to v_1 \to v_2 \to v_3 \to v_0$ such that $\{v_0,v_2\} \subseteq X$ and $\{v_1,v_3\} \subseteq Y$ for some distinct partite sets $X$ and $Y$ of $D$.

Given a multipartite tournament $D$, if a directed walk contains vertices which induce a directed cycle of length $3$ in $D$ (resp.\ a directed hole of length $4$ in $D$), then we say that it is of \emph{Type~1} (resp.\ \emph{Type~2}).

\begin{Lem}\label{lem:34cycle1}
Let $D$ be a multipartite tournament of order $n \ge 3$.
  Then any directed walk of length at least $n$ in $D$ is of Type~1 or Type~2.
\end{Lem}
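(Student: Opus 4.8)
The plan is to extract from the long walk a \emph{shortest} directed cycle lying on it, and then to show that in a multipartite tournament such a cycle must be chordless and of length $3$ or $4$.

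First I would produce a directed cycle by counting. A directed walk of length at least $n$ records at least $n+1$ vertices with multiplicity, so by the pigeonhole principle some vertex repeats; the segment of the walk between two occurrences of a repeated vertex is a closed directed walk of positive length, and any such closed walk contains a directed cycle all of whose vertices lie on the original walk. Writing $U$ for the set of vertices appearing on the walk, this shows that $D[U]$ contains a directed cycle. Since $D$ is an orientation of a simple graph it has no digons, so every directed cycle has length at least $3$.

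Next I would let $C=u_0u_1\cdots u_{l-1}u_0$ be a directed cycle of minimum length $l\ge 3$ in $D[U]$ and argue that $C$ is chordless. If some non-consecutive pair $u_i,u_j$ were adjacent, then whichever way the arc between them is oriented, concatenating it with the appropriate arc-path of $C$ joining $u_i$ and $u_j$ yields a directed cycle in $D[U]$ of length at least $3$ but strictly less than $l$, contradicting minimality; a short case check on the two orientations confirms this. Hence $C$ has no chord and is an induced directed cycle.

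The crux, which I expect to be the main obstacle, is ruling out an induced directed cycle of length at least $5$. Here I would use that in a multipartite tournament two vertices are non-adjacent precisely when they lie in the same partite set. In the induced cycle $C$ every non-consecutive pair is non-adjacent, hence lies in a common partite set; for $l\ge 5$ this forces, via the transitivity of lying in the same partite set applied to $\{u_0,u_2\}$ and $\{u_0,u_3\}$, the consecutive vertices $u_2,u_3$ into the same partite set, contradicting the arc $u_2\to u_3$. Therefore $l\in\{3,4\}$. If $l=3$, then $C$ is a directed cycle of length $3$ on the walk's vertices, so the walk is of Type~1; if $l=4$, then the two non-consecutive pairs of the chordless cycle lie in distinct partite sets, so $C$ is a directed hole of length $4$ on the walk's vertices and the walk is of Type~2. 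This completes the argument.
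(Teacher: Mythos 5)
Your proof is correct, but it takes a genuinely different route from the paper's. The paper extracts a directed cycle $C$ from the long walk and then splits into two cases according to whether $V(C)$ meets at least three partite sets or only two: in the first case it invokes the theorem of Goddard, Kubicki, Oellermann and Tian (Theorem~\ref{thm:goddard1991multipartite}) to produce a $3$-cycle on $V(C)$, and in the second case it runs an explicit minimal-index argument on ``crossing'' arcs $(u_i,u_{l-i-1})$ to exhibit a directed hole of length $4$ inside $C$. You instead take a \emph{shortest} directed cycle on the walk's vertex set, observe that minimality forces it to be chordless, and then use the fact that non-adjacency in a multipartite tournament is exactly ``same partite set'' (an equivalence relation) to kill any chordless cycle of length at least $5$: the pairs $\{u_0,u_2\}$ and $\{u_0,u_3\}$ would put the consecutive vertices $u_2,u_3$ in one partite set. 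Your argument is self-contained and shorter --- it avoids the external citation entirely and isolates the clean structural fact that a shortest directed cycle in a multipartite tournament has length $3$ or $4$ --- while the paper's version is more constructive about where the $4$-hole sits inside the particular cycle found on the walk. Both establish the lemma; all the small steps in yours (the pigeonhole extraction of a cycle, the two orientations of a chord each yielding a strictly shorter cycle of length at least $3$, and the identification of the chordless $4$-cycle as a directed hole) check out.
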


\begin{proof}
  Let $Q$ be a directed walk of length at least $n$ in $D$.
  Since the length of $Q$ is greater than or equal to the number of vertices of $D$, $Q$ contains a directed cycle $C:= u_0 \to u_1 \to u_2 \to \cdots \to u_{l-1} \to u_0$ for an integer $l \ge 3$.

  {\it Case 1.} There are at least $3$ vertices on $C$ which belong to distinct partite sets in $D$.
We may apply Theorem~\ref{thm:goddard1991multipartite} to the multipartite tournament induced by $V(C)$ to conclude that there is a directed cycle of length $3$ all of whose vertices are on $C$.
Then it is easy to check that $Q$ is of Type~1.

  {\it Case 2.}  There exist two distinct partite sets $X$ and $Y$ in $D$ such that $V(C) \subseteq X \cup Y$.
  Then $l$ is even, so $l \ge 4$.
  Without loss of generality, we may assume that a vertex on $C$ with an even index belongs to $X$ and a vertex on $C$ with an odd index belongs to $Y$.
  Since $D$ is a $k$-partite tournament, there is an arc between $u_i$ and $u_{l-i-1}$ for each integer $0 \le i \le l/2 - 1$.
Since $(u_{l-1},u_0) \in A(D)$ and $C$ is a directed cycle, $(u_{i},u_{l-i-1})$ is an arc in $D$ for some $i \in \{1,\ldots, l/2 -1\}$.
We may regard $i$ as the smallest index among $1, \ldots, l/2 -1$ such that $(u_{i},u_{l-i-1})$ is an arc in $D$.
  Then $(u_{l-i},u_{i-1}) \in A(D)$ and $C' := u_{i-1} \to u_i \to u_{l-i-1} \to u_{l-i} \to u_{i-1}$ is a directed hole of length $4$ in $D$.
  Thus $Q$ is of Type~2.
 \end{proof}

We note that a multipartite tournament has a sink if and only if the sink elimination index is greater than or equal to one.
In the following five results prior to our main result, we examine lengths of directed walks from a vertex in $D_{\zeta(D)}$ to a vertex in $\bigcup_{I=0}^{\zeta(D)-1}W_i$ where $(W_0,\ldots,W_{\zeta(D)})$ is the sink sequence of a multipartite tournament $D$ with a directed cycle and $\zeta(D) \ge 1$.

\begin{Lem}\label{lem:3cycle}
 Let $D$ be a multipartite tournament.
  Suppose that there exists a $(u,v)$-directed walk of Type~1 of length $\ell$ for some vertices $u$ and $v$.
Then there is a $(u,v)$-directed walk of length $\ell+3m$ for each nonnegative integer $m$.
\end{Lem}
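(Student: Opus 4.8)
The plan is to exploit the defining property of a Type~1 walk---that it passes through a vertex lying on a directed $3$-cycle of $D$---and to lengthen the walk by repeatedly looping around that cycle. Since each loop contributes exactly $3$ to the length, performing it $m$ times yields the required length $\ell+3m$.

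First I would record the given walk as $Q\colon u=w_0\to w_1\to\cdots\to w_\ell=v$. Because $Q$ is of Type~1, three of its vertices induce a directed $3$-cycle in $D$; call them $a,b,c$, and after relabelling assume this induced cycle is $a\to b\to c\to a$, so that $(a,b),(b,c),(c,a)\in A(D)$. In particular $a$ occurs on $Q$, say $a=w_j$ for some $j\in\{0,\ldots,\ell\}$.

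Next, for each nonnegative integer $m$, I would splice in loops at this occurrence of $a$: follow $Q$ from $u$ up to $w_j=a$, then traverse the closed directed walk $a\to b\to c\to a$ exactly $m$ times, and finally follow $Q$ from $w_j$ onward to $v$. Each traversal of $a\to b\to c\to a$ is a legitimate closed directed walk of length $3$ starting and ending at $a$, so the concatenation is a genuine $(u,v)$-directed walk, and its length is $\ell+3m$, as required.

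There is essentially no hard step in this argument; the single point that must be verified is that the vertex at which the loops are inserted actually lies on $Q$, and this is exactly what being of Type~1 guarantees. I would only remark that, should the induced cycle instead be oriented $a\to c\to b\to a$, the identical insertion works after interchanging the roles of $b$ and $c$, so the orientation plays no role.
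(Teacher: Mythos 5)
Your argument is correct and is essentially the same as the paper's proof: both split the walk at an occurrence of a vertex lying on the induced directed $3$-cycle and insert $m$ traversals of that cycle there, adding $3m$ to the length. No issues.
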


\begin{proof}
  Let $Z$ be a $(u,v)$-directed walk of length $\ell$ that contains three vertices which induce a directed cycle $C$ of length $3$ in $D$.
  Let $x$ be a vertex on $C$, $Z_1$ be a $(u,x)$-section of $Z$, and $Z_2$ be the $(x,v)$-section of $Z$ obtained by cutting $Z_1$ away from $Z$.
  We may assume that the sequence representing $C$ starts at $x$.
  Then, for a nonnegative integer $m$, we may create a $(u,v)$-directed walk of length $\ell + 3m$ in such a way that we traverse $Z_1$, $C$ as many as $m$ times, and then $Z_2$.
\end{proof}

\begin{Lem}\label{lem:4cycle}
Let $D$ be a multipartite tournament
with the sink elimination index $\zeta(D) \ge 1$ and
the sink sequence $\left(W_0,\ldots,W_{\zeta(D)}\right)$.
  For a vertex $u$, suppose that there exists a $(u,w)$-directed walk $Z$ of Type~2 of length $\ell$ for some vertex $w$ such that (i) $w$ belongs to $\bigcup_{i=0}^{\zeta(D)-1}W_i$ or
  (ii) there exists a directed hole of length $4$ such that its four vertices are on $Z$ and $w$ does not belong to any of two partite sets which the four vertices belong to.
Then there exists a positive integer $N$ such that for each integer $m \ge N$, there is a $(u,w)$-directed walk of length $\ell+2m$.
\end{Lem}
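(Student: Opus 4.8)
The plan is to manufacture $(u,w)$-directed walks realizing \emph{every} sufficiently large length of the same parity as $\ell$; since $\{\ell+2m:m\ge N\}$ is exactly the set of integers $\ge\ell+2N$ that are congruent to $\ell$ modulo $2$, this is precisely the assertion. Write the hole as $v_0\to v_1\to v_2\to v_3\to v_0$ with $\{v_0,v_2\}\subseteq X$ and $\{v_1,v_3\}\subseteq Y$ for distinct partite sets $X,Y$, and recall that $v_0,\ldots,v_3$ all lie on $Z$. Two elementary operations are available. First, because each $v_j$ lies on $Z$, inserting the closed walk $v_j\to v_{j+1}\to v_{j+2}\to v_{j+3}\to v_j$ (indices read modulo $4$) at an occurrence of $v_j$ lengthens a walk by $4$, so the set of attainable lengths is closed under $+4$. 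Second, and crucially, if two hole vertices at hole-distance $2$ both send an arc into $w$, say $v_1\to w$ and $v_3\to w$, then a walk ending with the exit step $v_1\to w$ may be rerouted to end with $v_1\to v_2\to v_3\to w$, which lengthens it by exactly $2$; iterating this yields a $+2$ operation along a fixed parity class. Thus it suffices to produce, for the correct parity, a single walk ending in such an exit together with the hole.

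I would first fix the orientations of the arcs joining $w$ to the hole. Since a directed hole is a directed cycle, all four hole vertices lie in the sink-free core $D_{\zeta(D)}$; under (i) the vertex $w$ lies in the acyclic part $\bigcup_{i=0}^{\zeta(D)-1}W_i$, out of which no arc reaches the core (each vertex of $W_i$ has all out-neighbours in $W_0\cup\cdots\cup W_{i-1}$), so every arc between $w$ and the hole is directed \emph{into} $w$; under (ii) the vertex $w$ lies outside $X\cup Y$, hence is joined by an arc to all four hole vertices. I would then split into cases. If some hole vertex points to $w$ while $w$ points to some hole vertex, then leaving $w$, travelling around the hole, and re-entering $w$ gives a closed walk through $w$; choosing the exit and re-entry vertices in different partite sets makes this closed walk of \emph{odd} length, and together with the length-$4$ hole loop (of coprime length) all large lengths follow. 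If all arcs point into $w$ with $w\notin X\cup Y$, all four hole vertices are exits, so from a hole vertex one reaches $w$ in both length $1$ and length $2$, a $+1$ operation giving all large lengths. If all arcs point \emph{out} of $w$ (possible only under (ii)), I would build a closed walk through $w$ by leaving $w$ into a hole vertex, running around the hole to the last hole vertex of $Z$, following the tail of $Z$ to the in-neighbour of $w$, and returning to $w$; varying the entry vertex produces closed walks of four consecutive lengths, again forcing all large lengths. The genuinely restrictive case is all arcs into $w$ with $w\in X$ (the case $w\in Y$ being symmetric), where exactly $v_1,v_3$ (at hole-distance $2$) point into $w$ and only the $+2$ reroute is available.

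The main obstacle is this last case, because the $+2$ reroute produces walks whose length has the parity $\pi_j$ of the hole-routes from the branch vertex $v_j$ to $w$, and I must guarantee $\pi_j\equiv\ell\pmod 2$. I would resolve this through a dichotomy on the $Z$-suffix $S_j$ from a hole vertex $v_j$ to $w$. The hole-routes end with an exit and share the fixed parity $\pi_j$, so the $+2$ reroute realizes \emph{every} large length of parity $\pi_j$; since $\ell$ is the length of the $Z$-prefix to $v_j$ plus $|S_j|$, if $|S_j|\equiv\pi_j\pmod2$ for some $j$ then prepending that prefix yields all large lengths of parity $\ell$, as required. If instead $|S_j|\not\equiv\pi_j$, then $S_j$ witnesses a $(v_j,w)$-walk of the opposite parity, and the delicate point I expect to fight with is upgrading this lone witness into all large lengths of that parity: one must re-enter the hole (or exploit the in-neighbour of $w$ at which $S_j$ terminates) to create a reroute-able exit in the opposite parity class, after which the $+2$ operation fills the class. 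Once each case delivers all sufficiently large lengths congruent to $\ell$ modulo $2$, the desired $(u,w)$-walk of length $\ell+2m$ exists for every $m\ge N$.
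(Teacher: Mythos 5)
Your reduction to ``all sufficiently large lengths congruent to $\ell$ modulo $2$'' is the right reformulation, and your case analysis on the orientation of the arcs between $w$ and the hole is sound as far as it goes: when $w$ has an out-arc into the hole, or when $w\notin X\cup Y$ receives arcs from all four hole vertices, you correctly obtain all sufficiently large lengths of both parities, which is more than needed. The genuine gap is exactly where you flag it: the case $w\in X$ (so hypothesis (i) holds and only $v_1,v_3$ send arcs to $w$) with $|S_j|\not\equiv\pi_j\pmod 2$ for every $j$. You leave this as ``the delicate point I expect to fight with,'' and your proposed fix --- re-enter the hole to create a reroute-able exit in the opposite parity class --- does not work as stated: re-entering the hole from a vertex $z$ of the suffix produces a closed walk whose length may well be even, and no combination of an even closed walk with the length-$4$ hole can change parity. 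Nor can the lone suffix $S_j$ of the opposite parity be fed to your $+2$ reroute, since that operation requires the walk to end with a hole-exit arc into $w$, which $S_j$ need not do; the $+4$ insertion alone only yields $\ell, \ell+4, \ell+8,\ldots$ and misses $\ell+2$ modulo $4$.

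The missing observation that unlocks this case is a parity fact you come close to stating: if $S_j$ lies entirely in $X\cup Y$, its vertices alternate between $X$ and $Y$, so $|S_j|\equiv\pi_j\pmod 2$ automatically and you are in your good subcase. Hence the bad subcase forces a vertex $z$ on the suffix with $z\notin X\cup Y$, adjacent to all four hole vertices. The paper then splits: if every $v_i$ sends an arc to $z$, one may circle the hole and depart toward $z$ after $4k+i$ steps for each $i\in\{0,1,2,3\}$, giving exits at four consecutive lengths and hence all large lengths; otherwise $z$ sends an arc back into the hole, and the subdigraph induced by the $(v_0,z)$-section of $Z$ together with $V(H)$ is a strongly connected multipartite tournament meeting at least three partite sets, so by Theorem~\ref{thm:goddard1991multipartite} it contains a directed $3$-cycle and, together with the $4$-hole, is primitive --- the odd cycle is what finally breaks the parity obstruction. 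Without this (or some other explicit source of an odd closed walk through $Z$), your last case is unproved, so the proposal as written is incomplete.
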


\begin{proof}
Since $Z$ is of Type~2, there are four vertices on $Z$ which induce a directed hole $H:=v_0 \to v_1 \to v_2 \to v_3 \to v_0$ of length $4$ in $D$.
 Then there exist two distinct partite sets $X$ and $Y$ of $D$ such that $\{v_0,v_2\} \subseteq X$ and $\{v_1,v_3\} \subseteq Y$.
If the case (ii) of the lemma statement happens, we may assume that $H$ is the hole mentioned in the case.
Let $Z_1$ be a $(u,v_0)$-section of $Z$, and $Z_2$ be the $(v_0,w)$-section of $Z$ obtained by cutting $Z_1$ away from $Z$.
   We denote the lengths of $Z_1$ and $Z_2$ by $\ell_1$ and $\ell_2$, respectively.

  {\it Case 1.} There exists a vertex $z$ on $Z_2$ which does not belong to $X \cup Y$.
  Let $Z_3$ be a $(v_0,z)$-section of $Z_2$, and $Z_4$ be the $(z,w)$-section of $Z_2$ obtained by cutting $Z_3$ away from $Z_2$.
   We denote the lengths of $Z_3$ and $Z_4$ by $\ell_3$ and $\ell_4$, respectively.
  We consider the two subcases: $(v_i,z) \in A(D)$ for each $i =0,1,2,3$; there is an arc $(z,v)$ for some vertex $v$ on $H$.
  Suppose $(v_i,z) \in A(D)$ for each $i =0,1,2,3$ and fix a positive integer $\alpha$.
   Then, traverse the directed walk $Z_1$, go around $H$ as many times as desired, depart it at an appropriate vertex on $H$ to reach $z$ by an arc, and traverse $Z_4$.
   This creates a $(u,w)$-directed walk of length $\ell_1 + \alpha + \ell_4$.

  Now suppose that there is an arc $(z,v)$ for some vertex $v$ on $H$.
  Let $D^*$ be the subdigraph of $D$ induced by $V(Z_3) \cup V(H)$.
  Then $D^*$ contains vertices from at least three partite sets.
  Since $D^*$ is a multipartite tournament,
    $D^*$ contains a directed cycle $C$ of length $3$ by Theorem~\ref{thm:goddard1991multipartite}.
  Moreover, the directed walk $Z_3$, the arc $(z,v)$, and $(v,v_0)$-section of $H$ form a closed directed walk $Q$ in $D^*$.
    Then $V(Q) \cup V(H) = V(D^*)$ and $V(Q) \cap V(H) \neq \emptyset$, so $D^*$ is strongly connected.
    Since $D^*$ contains the directed cycles $C$ of length $3$ and $H$ of length $4$, we may conclude that $D^*$ is primitive.
    Thus, for any $\beta \ge \exp(D^*)$, there is a $(v_0,z)$-directed walk of length $\beta$ and so there is a $(u,w)$-directed walk of length $\ell_1 + \beta + \ell_4$.

    Since $\ell_1 + \ell_4 \le \ell$ and $\alpha$ and $\beta$ were arbitrarily chosen among the positive integers bounded below, we have shown in both subcases that there exists a sufficiently large $N$ such that there is a $(u,w)$-directed walk of length $\ell+2m$ for each integer $m \ge N$.

  {\it Case 2.} Every vertex on $Z_2$ belongs to $X \cup Y$.
  Then the case (i) of the lemma statement happens, that is, $w \in \bigcup_{i=0}^{\zeta(D)-1}W_i$.
  Let $j=0$ if $w \in Y$ and $j=1$ if $w \in X$.
  Then $2m+j+1$ and $\ell_2$ have the same parity.
  For, $\ell_2$ is odd if $w \in Y$ and $\ell_2$ is even if $w \in X$ since $v_0 \in X$ and $Z_2$ is a $(v_0,w)$-directed walk whose vertices belong to $X \cup Y$.
  Furthermore, $w$ and any of $v_j, v_{j+2}$ belong to distinct partite sets.
  Since $w\in\bigcup_{i=0}^{\zeta(D)-1}W_i$ and $\{v_0,v_1,v_2,v_3\}\subseteq V(D_{\zeta(D)})$, there are arcs $(v_j,w)$ and $(v_{j+2},w)$ in $D$.
     Now let
     \[
     \Theta_i = Z_1 \to H_i \to H' \to w
     \]
     %
     where $H'$ denotes the $(v_0,v_j)$-section of $H$ and $H_i$ means the directed walk obtained by going around $H$ $i$ times, for a nonnegative integer $i$.
     Then $\Theta_i$ is a $(u,w)$-directed walk in $D$.
     For a nonnegative integer $i$, we denote by $\Lambda_i$ the directed walk obtained from $\Theta_i$ by replacing the arc $(v_j,w)$ with the directed path $v_j \to v_{j+1} \to v_{j+2} \to w$.
    Then the lengths of $\Theta_i$ and $\Lambda_i$ are $\ell_1 + 4i + j+1$ and $\ell_1 + 4i + j+3$, respectively, for each nonnegative integer $i$.
    Accordingly, we have shown that there is a $(u,w)$-directed walk of length $\ell_1 + 2m+j+1$ for each nonnegative integer $m$.
    Since $2m+j+1$ and $\ell_2$ have the same parity, we have actually shown that there exists $(u,w)$-directed walk of length $\ell + 2m$ for each nonnegative integer $m$.
\end{proof}

Let $p_1,\ldots, p_t$ be positive integers with
$\gcd(p_1,\ldots,p_t)=1$.
The {\em Frobenius number} of
$p_1,\ldots, p_t$ is the largest integer $b$ for which the
Frobenius equation $$p_1x_1+\cdots+p_tx_t=b$$ has no nonnegative
integer solution $(x_1,\ldots,x_t)$.
The number $b$ is denoted by $F(p_1,\ldots,p_t)$.

\begin{Lem}\label{lem:frobenius}
Let $D$ be a multipartite tournament
with the sink elimination index $\zeta(D) \ge 1$ and
the sink sequence $\left(W_0,\ldots,W_{\zeta(D)}\right)$.
  For a vertex $u \in V(D_{\zeta(D)})$, suppose that there exists a $(u,w)$-directed walk of length $\ell$ for some vertex $w\in \bigcup_{i=0}^{\zeta(D)-1}W_i$ such that its sequence contains both a directed cycle of length $3$ and a directed hole of length $4$.
Then there exists a positive integer $N$ such that for each integer $m \ge N$, there is a $(u,w)$-directed walk of length $\ell+m$.
  \end{Lem}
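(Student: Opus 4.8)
The plan is to read the hypothesis as supplying, on a \emph{single} walk $Z$, two independent ``pumps'': a directed $3$-cycle $C$ and a directed $4$-cycle $H$ (the directed hole), both of which are genuine directed cycles of $D$ whose vertices lie on $Z$. Splicing copies of each into $Z$ will realize every length of the form $\ell + 3a + 4b$ with $a,b$ nonnegative integers, after which the Frobenius data for $\{3,4\}$ forces every sufficiently large length to appear. This is why I would not route the argument through Lemma~\ref{lem:4cycle}: unlike that lemma, which must extract an increment of $2$ from a single hole by exploiting $w\in\bigcup_i W_i$, here two distinct cyclic structures are available, so one can pump each directly.

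First I would fix a vertex $x$ of $C$ and a vertex $y$ of $H$ that lie on $Z$; such vertices exist since, by hypothesis, the sequence of $Z$ contains the vertices of $C$ and of $H$. Because $C$ and $H$ are directed cycles, going once around $C$ from $x$ and once around $H$ from $y$ are closed directed walks of lengths $3$ and $4$. For each pair of nonnegative integers $a,b$ I would then build a $(u,w)$-directed walk by traversing $Z$ and, at the relevant occurrences of $x$ and $y$, detouring around $C$ exactly $a$ times and around $H$ exactly $b$ times before resuming $Z$. Concretely, if $x$ precedes $y$ on $Z$, cut $Z$ into its $(u,x)$-, $(x,y)$- and $(y,w)$-sections and concatenate the first section, $a$ laps of $C$, the middle section, $b$ laps of $H$, and the last section; the cases where $y$ precedes $x$, or $x=y$, are handled identically by inserting both families of laps at the appropriate common position. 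Each resulting sequence is a legitimate $(u,w)$-directed walk of length $\ell + 3a + 4b$.

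It then remains a purely numerical step: since $\gcd(3,4)=1$ and $F(3,4) = 3\cdot 4 - 3 - 4 = 5$, every integer $n\ge 6$ can be written as $n = 3a + 4b$ with $a,b\ge 0$. Taking $N=6$, for each $m\ge N$ I choose such $a,b$ with $n=m$ and obtain a $(u,w)$-directed walk of length $\ell+m$, as required. The only delicate point in the whole argument, and the step I would be most careful about, is the splicing itself: verifying that the two pumps can be inserted simultaneously and independently regardless of the relative positions of $C$ and $H$ along $Z$, including the degenerate case $x=y$ and the case where $C$ and $H$ share vertices. Once that bookkeeping is set up correctly the conclusion is immediate, and it is worth noting that neither the ``induced'' nature of the hole nor the ambient hypotheses $u\in V(D_{\zeta(D)})$ and $w\in\bigcup_i W_i$ are actually used here---only that $C$ and $H$ are directed cycles whose vertices appear on $Z$---which is precisely why this lemma is lighter than Lemmas~\ref{lem:3cycle} and~\ref{lem:4cycle}.
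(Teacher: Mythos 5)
Your proof is correct, and it takes a genuinely different, more elementary route than the paper's. The paper does not pump the two cycles independently: it chooses a cut vertex $v$ on $Z$ (a vertex of $H$ occurring after all of $C$, or a vertex of $C$ outside the two partite sets of $H$), splits $Z$ into $Z_1\to Z_2$, and feeds one section to Lemma~\ref{lem:3cycle} (increment $3$) and the other to Lemma~\ref{lem:4cycle} (increment $2$), combining the two via $F(2,3)=1$. That is exactly why the statement carries the hypotheses $u\in V(D_{\zeta(D)})$ and $w\in\bigcup_{i=0}^{\zeta(D)-1}W_i$ --- they are what is needed to bring $Z_2$ (or $C\to Z_2$) within the scope of Lemma~\ref{lem:4cycle} --- and why the resulting $N$ is non-explicit, being inherited from the $N'$ of that lemma, which in turn involves exponents of primitive subdigraphs. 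Your argument instead splices $a$ laps of the $3$-cycle and $b$ laps of the $4$-hole directly into $Z$ at a vertex of each that lies on $Z$, realizing every length $\ell+3a+4b$, and then $F(3,4)=5$ gives every $\ell+m$ with $m\ge 6$; this uses only that $C$ and $H$ are directed cycles meeting $Z$, dispenses with Lemma~\ref{lem:4cycle} and with the hypotheses on $u$ and $w$, and yields the explicit constant $N=6$. The splicing bookkeeping you worry about is indeed harmless: inserting closed walks at (possibly coincident) occurrences of $x$ and $y$ are independent operations on the vertex sequence. The only thing your shortcut forgoes is uniformity with the surrounding development, where the increment-$2$ pump of Lemma~\ref{lem:4cycle} is genuinely needed elsewhere (e.g.\ for the ``period at most two'' conclusion in the Type~2-only case of Theorem~\ref{thm:at most 3}); for this lemma in isolation, your argument is sound and strictly simpler.
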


\begin{proof}
  Let $Z$ be a $(u,w)$-directed walk of length $\ell$ whose sequence contains a directed cycle $C$ of length $3$ and a directed hole $H$ of length $4$.
  If (i) the sequence of $C$ appears before that of $H$ on $Z$, then there exists a vertex on $H$ such that, on $Z$, all the vertices on $C$ appear before it and all the vertices on $H$ appear after it.
  We denote such a vertex by $x$.
  Suppose that (ii) the sequence of $H$ appears before that of $C$ on $Z$.
    Since the vertices on $H$ belong to two distinct partite sets and the vertices on $C$ belong to three distinct partite sets, there exists a vertex $y$ on $C$ which does not belong to any of two partite sets which contain the vertices on $H$.

  Now we denote by $v$ the vertex $x$ if (i) happens, and the vertex $y$ if (ii) occurs.
  Let $Z_1$ be a $(u,v)$-section of $Z$, and $Z_2$ be the $(v,w)$-section of $Z$ obtained by cutting $Z_1$ away from $Z$.
   We denote the lengths of $Z_1$ and $Z_2$ by $\ell_1$ and $\ell_2$, respectively.
    Fix a nonnegative integer $t$.
     Since $F(2,3)=1$, there exist nonnegative integers $m_1, m_2$ such that $t = 3m_1 + 2m_2 -2 $.

     Consider the case (i).
     Then $v=x$.
     Since $Z_1$ is a $(u,v)$-directed walk of Type~1, by Lemma~\ref{lem:3cycle}, there exists a $(u,v)$-directed walk $Q_1$ of length $\ell_1 + 3m_1$.
   Since $Z_2$ is a $(v,w)$-directed walk of Type~2, by Lemma~\ref{lem:4cycle}, there exists a positive integer $N'$ such that there is a $(v,w)$-directed walk $Q_2$ of length $\ell_2+2(m_2 + N')$.
   Then $Q_1 \to Q_2$ is a $(u,w)$-directed walk of length
   \[
   (\ell_1 + 3m_1) + (\ell_2 + 2(m_2+N')) = \ell + 2N' +2 + (3m_1 + 2m_2 -2)= \ell + 2N'+2 + t.
   \]

    Consider the case (ii).
    Then $v=y$.
   By Lemma~\ref{lem:4cycle}, there exists a positive integer $N''$ such that there is a $(u,v)$-directed walk $Q'_1$ of length $\ell_1+2(m_2 + N'')$.
    By applying Lemma~\ref{lem:3cycle} to the $(v,w)$-directed walk $C \to Z_2$, there exists a $(v,w)$-directed walk $Q'_2$ of length $3+ \ell_2 + 3m_1$.
   Then $Q'_1 \to Q'_2$ is a $(u,w)$-directed walk of length
   \[
    (\ell_1 + 2(m_2+N'')) + (3+ \ell_2 + 3m_1)   = \ell + 2N'' +5 + (3m_1 + 2m_2 -2)= \ell + 2N''+5 + t.
   \]
    Let
   \[
   N=
   \begin{cases}
     2N'+2, & \mbox{if } v=x; \\
     2N''+5, & \mbox{if } v=y.
   \end{cases}
   \]
   Then, since $t$ was chosen as an arbitrary nonnegative integer, we have shown that, for each integer $m \ge N$, there is a $(u,w)$-directed walk of length $\ell + m$.
\end{proof}

 \begin{Lem}\label{lem:both34}
 Let $D$ be a multipartite tournament
with a directed cycle, the sink elimination index $\zeta(D) \ge 1$, and
the sink sequence $\left(W_0,\ldots,W_{\zeta(D)}\right)$.
  For a vertex $u \in V(D_{\zeta(D)})$, suppose that there exist a $(u,w_1)$-directed walk $Q_1$ of Type~1 and a $(u,w_2)$-directed walk $Q_2$ of Type~2 for some vertices $w_1,w_2\in\bigcup_{i=0}^{\zeta(D)-1}W_i$.
  Then there exists a positive integer $N$ such that
   there is a $(u,w)$-directed walk of length $\ell$ for each vertex $w \in  \bigcup_{i=0}^{\zeta(D)-1}W_i$ and any integer $\ell \ge N$.
\end{Lem}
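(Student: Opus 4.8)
The plan is to reduce everything to Lemma~\ref{lem:frobenius}. Concretely, I would show that for each vertex $w \in \bigcup_{i=0}^{\zeta(D)-1}W_i$ there is a single $(u,w)$-directed walk whose sequence contains both a directed cycle of length $3$ and a directed hole of length $4$. Once such a walk (of some length $\ell_w$) is in hand, Lemma~\ref{lem:frobenius} produces an integer $N_w$ such that a $(u,w)$-directed walk of every length $\ge \ell_w + N_w$ exists, and taking $N = \max_w (\ell_w + N_w)$ over the finite set $\bigcup_{i=0}^{\zeta(D)-1}W_i$ would give the uniform bound claimed. It is essential that both substructures sit on \emph{one} walk: two separate walks, one of each type, would (via Lemmas~\ref{lem:3cycle} and~\ref{lem:4cycle}) only furnish lengths in a fixed residue class modulo $3$ together with all large lengths of a fixed parity, which need not exhaust all large integers, whereas a single walk lets Lemma~\ref{lem:frobenius} exploit $\gcd(2,3)=1$.

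First I would record two structural facts. Let $C$ be the $3$-cycle carried by $Q_1$ and $H := v_0 \to v_1 \to v_2 \to v_3 \to v_0$ the $4$-hole carried by $Q_2$, with $\{v_0,v_2\}\subseteq X$ and $\{v_1,v_3\}\subseteq Y$ for distinct partite sets $X,Y$. Since a vertex of $\bigcup_{i=0}^{\zeta(D)-1}W_i$ lies on no directed cycle (following its out-arcs, its sink-sequence index strictly decreases, so it can never return), all vertices of $C$ and of $H$ lie in $V(D_{\zeta(D)})$. On the other hand, if $w \in W_i$ then every out-neighbor of $w$ lies in $\bigcup_{j<i}W_j$, so $w$ has no out-arc into $V(D_{\zeta(D)})$. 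Consequently, any vertex of $C$ or of $H$ lying in a partite set different from that of $w$ is adjacent to $w$ and therefore must send an arc \emph{to} $w$. Because $C$ meets three distinct partite sets and $H$ meets two, at least one such vertex exists in each case; hence $w$ is reachable by a single arc from some vertex of $C$ and from some vertex of $H$.

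The main obstacle is to splice $C$ and $H$ onto one walk, since $D_{\zeta(D)}$ need not be strongly connected and there is no a priori walk from $C$ to $H$ or back. I would resolve this using the partite structure. As $C$ occupies three partite sets while $H$ occupies only $X$ and $Y$, $C$ has a vertex $x_k \notin X \cup Y$, and $x_k$ is then adjacent to all four vertices of $H$. So either (a) some arc $x_k \to v_j$ exists, or (b) every vertex of $H$ sends an arc to $x_k$. In case (a) I would build the walk $u \rightsquigarrow C \to H \rightsquigarrow w$: follow a section of $Q_1$ to $C$, traverse $C$ once fully ending at $x_k$, cross $x_k \to v_j$ into $H$, traverse $H$ once fully, continue along $H$ to a vertex sending an arc to $w$, and take that arc. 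In case (b) I would instead build $u \rightsquigarrow H \to C \rightsquigarrow w$: follow a section of $Q_2$ to $H$, traverse $H$ once fully, cross some $v_j \to x_k$ into $C$, traverse $C$ once fully, continue along $C$ to a vertex sending an arc to $w$, and take that arc. In both cases the resulting walk visits all of $V(C)$ and all of $V(H)$, hence contains a $3$-cycle and a $4$-hole, and it terminates at $w$, the terminal arc existing by the reachability fact above.

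Having produced such a walk for every $w$, I would invoke Lemma~\ref{lem:frobenius} (whose hypotheses $u \in V(D_{\zeta(D)})$ and $w \in \bigcup_{i=0}^{\zeta(D)-1}W_i$ are exactly those of the present statement) for each $w$ and combine the finitely many resulting thresholds into a single $N$. The only points demanding care are the \emph{full} traversals of $C$ and $H$, needed so that all three, respectively four, inducing vertices actually appear on the walk, and the case split (a)/(b) that lets me order the two substructures so that the walk can still end at $w$.
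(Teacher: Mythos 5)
Your proposal is correct and follows essentially the same route as the paper: for each $w$ you splice the $3$-cycle from $Q_1$ and the $4$-hole from $Q_2$ onto a single $(u,w)$-walk by finding an arc between them (guaranteed because the $3$-cycle meets a partite set disjoint from the two partite sets of the hole), splitting on the direction of that arc to decide whether to route $u \rightsquigarrow C \to H \rightsquigarrow w$ or $u \rightsquigarrow H \to C \rightsquigarrow w$, then invoking Lemma~\ref{lem:frobenius} and taking the maximum threshold over the finitely many $w$. The paper's proof is identical in structure; your write-up merely spells out in more detail why the cycle vertices lie in $V(D_{\zeta(D)})$ and why the terminal arc into $w$ exists.
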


\begin{proof}
Take $w \in  \bigcup_{i=0}^{\zeta(D)-1}W_i$.
Let $C$ be a directed cycle of length $3$ in $D$ induced by three vertices in $Q_1$ and
 $H$ be a directed hole of length $4$ in $D$ induced by four vertices in $Q_2$.
 Then there are three distinct partite sets of $D$ to which the vertices on $C$ belong.
  Since the vertices on $H$ belong to two distinct partite sets, there must be an arc linking a vertex $x$ on $C$ and a vertex $y$ on $H$.
  Moreover, there exist an arc from a vertex  $y'$ on $H$ to $w$ and an arc from a vertex $x'$ on $C$ to $w$ since $w \in  \bigcup_{i=0}^{\zeta(D)-1}W_i$ and the vertices on directed cycles belong to $V(D_{\zeta(D)})$.

If $(x,y) \in A(D)$ (resp.\ $(y,x) \in A(D)$), then a $(u,x)$-section of $Q_1$, $C$ starting at $x$, the arc $(x,y)$, $H$ starting at $y$, the $(y,y')$-section of $H$, and the arc $(y',w)$ (resp.\ a $(u,y)$-section of $Q_2$, $H$ starting at $y$, the arc $(y,x)$, $C$ starting at $x$, the $(x,x')$-section of $C$, and the arc $(x',w)$) form a $(u,w)$-directed walk $Z_w$ whose sequence contains both a directed cycle of length $3$ and a directed hole of length $4$.
Then there exists a positive integer $N_w$ such that, for each integer $m \ge N_w$, there is a $(u,w)$-directed walk of length $\ell_w+m$ by Lemma~\ref{lem:frobenius} where $\ell_w$ is the length of $Z_w$.
If we denote by $N$ the maximum of such integers $\ell_w + N_w$, there exists a $(u,w)$-directed walk of length $\ell$ for each vertex $w \in  \bigcup_{i=0}^{\zeta(D)-1}W_i$ and each integer $\ell \ge N$.
\end{proof}

\begin{Thm}\label{thm:34cycle}
    Let $D$ be a multipartite tournament
with a directed cycle, the sink elimination index $\zeta(D) \ge 1$, and
the sink sequence $\left(W_0,\ldots,W_{\zeta(D)}\right)$.
    Then, for each vertex $w\in \bigcup_{i=0}^{\zeta(D)-1}W_i$, one of the following properties is true:
  \begin{itemize}
  \item[(i)] for every vertex $u$ in $D_{\zeta(D)}$, there is a $(u,w)$-directed walk of Type~1;
    \item[(ii)] for every vertex $u$ in $D_{\zeta(D)}$, there is a $(u,w)$-directed walk of Type~2.
  \end{itemize}
\end{Thm}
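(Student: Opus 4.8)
The plan is to locate inside $D_{\zeta(D)}$ a single directed cycle that is reachable from \emph{every} vertex $u$ of $D_{\zeta(D)}$ and from which one may step directly to $w$; the type of that cycle (whether it carries a directed $3$-cycle or only a directed hole of length~$4$) will then decide, \emph{uniformly in $u$}, which of (i), (ii) holds. First I would record the global structure. Since $D$ has a directed cycle it is not acyclic, so by Proposition~\ref{prop:acyclic-digraph} we have $W_{\zeta(D)}=\emptyset$; equivalently $D_{\zeta(D)}$ has no sink, so every vertex of $D_{\zeta(D)}$ has an out-neighbour inside $D_{\zeta(D)}$, and following out-arcs from any $u$ eventually repeats a vertex. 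Hence every $u\in V(D_{\zeta(D)})$ reaches a directed cycle, that is, a nontrivial strong component of $D_{\zeta(D)}$. I would also isolate the arc-direction fact that drives the exit to $w$: if $x$ lies on a directed cycle and $x$ is not in the partite set of $w$, then $(x,w)\in A(D)$. Indeed $x\in V(D_{\zeta(D)})$ while $w\in W_i$ with $i\le\zeta(D)-1$, so were $(w,x)$ an arc, concatenating it with arbitrarily many loops around the cycle through $x$ would yield directed walks from $w$ of unbounded length, contradicting Lemma~\ref{lem:walk-length}.

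The central step is to produce a \emph{unique} terminal strong component. The key sublemma is that any two distinct nontrivial strong components $S_1,S_2$ of $D_{\zeta(D)}$ are joined by an arc: each $S_j$ contains a cycle and hence meets at least two partite sets, so if no arc joined $S_1$ and $S_2$ then, $D$ being a multipartite tournament, all of $V(S_1)\cup V(S_2)$ would lie in one partite set, contradicting that $S_1$ meets two. Thus the nontrivial strong components are pairwise comparable in the condensation of $D_{\zeta(D)}$. Since $D_{\zeta(D)}$ has no sink, every sink of the condensation is nontrivial (a trivial sink component would be a sink vertex of $D_{\zeta(D)}$), and two such sinks would be comparable, forcing one to send an arc to the other --- impossible. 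Hence the condensation has a unique sink $S^{*}$, which is nontrivial and reachable from every $u\in V(D_{\zeta(D)})$.

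Finally I would split on the cycle structure of $S^{*}$. If $S^{*}$ contains a directed $3$-cycle $C'$, then for each $u$ I route $u$ to $S^{*}$, use strong connectivity of $S^{*}$ to reach $C'$, traverse $C'$ so as to visit all three of its vertices and terminate at one lying outside the partite set of $w$ (such a vertex exists because $C'$ meets three partite sets), and leave by the arc to $w$ guaranteed by the arc-direction fact. The resulting $(u,w)$-walk contains the three vertices of $C'$ and so is of Type~1, giving (i). If instead $S^{*}$ has no $3$-cycle, I pick any directed cycle $C$ in $S^{*}$ and traverse it enough times to form a walk of length at least $|V(D)|$; by Lemma~\ref{lem:34cycle1} this walk is of Type~1 or Type~2, and it cannot be of Type~1, since all its vertices lie in $S^{*}$, which has no $3$-cycle. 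Hence $S^{*}$ contains a directed hole $H$ of length~$4$, and the analogous construction (route to $S^{*}$, traverse $H$ visiting all four vertices and ending at one outside the partite set of $w$, then leave to $w$) furnishes a Type~2 walk for every $u$, giving (ii).

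The step I expect to be the crux is the uniqueness of $S^{*}$ together with the verification that the relevant $3$-cycle or $4$-hole actually sits inside $S^{*}$: only this guarantees that one and the same cycle is simultaneously reachable from all $u$ and able to reach $w$, so that the dichotomy is uniform in $u$. A $3$-cycle residing in a non-terminal strong component, for example, would be useless for the vertices that cannot reach it; this is precisely why the argument is organised around the unique sink of the condensation rather than around the mere presence of short cycles somewhere in $D$.
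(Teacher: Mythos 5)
Your proof is correct, but it is organized quite differently from the paper's. The paper argues locally: it first shows (via a long walk in $D_{\zeta(D)}$ from $u$ and Lemma~\ref{lem:34cycle1}) that every $u$ admits a $(u,w)$-walk of Type~1 or Type~2, and then, assuming some vertex $v$ has, say, no Type~2 walk to $w$, it repairs an arbitrary Type~2 walk $Z_u$ into a Type~1 walk by splicing: since the $3$-cycle on $v$'s walk meets three partite sets and the $4$-hole on $Z_u$ only two, some arc joins them, and that arc must point from the hole to the $3$-cycle (otherwise $v$ would acquire a Type~2 walk), so one can jump from $Z_u$ onto the tail of $v$'s walk. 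Your argument instead builds a global structure: any two nontrivial strong components of $D_{\zeta(D)}$ are comparable (a cycle forces two partite sets, so nonadjacency of two components is impossible), the sinkless condensation therefore has a unique, nontrivial terminal component $S^{*}$ reachable from every $u$, and the dichotomy is decided once and for all by whether $S^{*}$ contains a directed $3$-cycle, with Lemma~\ref{lem:34cycle1} applied to a long walk around a cycle of $S^{*}$ supplying the induced $4$-hole in the negative case and Lemma~\ref{lem:walk-length} supplying the exit arc to $w$. Each route has its merits: the paper's splicing needs no structural preparation and is the shorter path to the stated disjunction, while your condensation argument is more transparent about \emph{why} the type is uniform in $u$ and in fact yields slightly more --- the same alternative, governed solely by $S^{*}$, holds simultaneously for every $w\in\bigcup_{i=0}^{\zeta(D)-1}W_i$, not just for each $w$ separately.
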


\begin{proof}
 Suppose that $D$ has $n$ vertices and fix vertex $w \in \bigcup_{i=0}^{\zeta(D)-1}W_i$.
 Since $D$ has a directed cycle, $n \ge 3$.
 We fix $u \in V(D_{\zeta(D)})$.
  Since $D_{\zeta(D)}$ has no sinks, there is a directed walk $Q_u$ in $D_{\zeta(D)}$ of length at least $n$ starting at $u$.
  Since $D_{\zeta(D)}$ is a subdigraph of $D$, $Q_u$ is a directed walk in $D$.
  By Lemma~\ref{lem:34cycle1}, $Q_u$ is of Type~1 or Type~2.
  Then $Q_u$ contains vertices which induce a directed cycle $C_u$ in $D$ which has a length $3$ or is a directed hole of length $4$.
  Since the vertices on $C_u$ belong to at least two distinct partite sets, there must be a vertex $y_u$ on $C_u$ which belongs to a partite set distinct from the one to which $w$ belongs.
  Since $D$ is a multipartite tournament, there must be an arc linking $y_u$ and $w$.
  Yet, $w\in \bigcup_{i=0}^{\zeta(D)-1}W_i$ implies $(y_u,w) \in A(D)$.
  Then the $(u,y_u)$-section of $Q_u$, $C_u$ starting at $y_u$, and the arc $(y_u,w)$ form a $(u,w)$-directed walk of Type~1 or Type~2 in $D$.
   Thus we have shown that
  \begin{itemize}
    \item[($\star$)]  for every vertex $u$ in $D_{\zeta(D)}$, there is a $(u,w)$-directed walk of Type~1 or Type~2.
  \end{itemize}

  For every vertex $u$ in $D_{\zeta(D)}$, if there exist a $(u,w)$-directed walk of Type~1 and a $(u,w)$-directed walk of Type~2, then the theorem statement is immediately true.

  Now suppose that there exists a vertex $v$ in $D_{\zeta(D)}$ such that either there is no $(v,w)$-directed walk of Type~1 or there is no $(v,w)$-directed walk of Type~2.
  Then, by Lemma~\ref{lem:34cycle1}, every $(v,w)$-directed walk of length at least $n$ is of Type~2 or every $(v,w)$-directed walk of length at least $n$ is of Type~1.
  We assume the latter.
  Then, by $(\star)$, there exists a $(v,w)$-directed walk $Z_v$ of Type~1.
   Then $Z_v$ contains vertices which induce a directed cycle $C$ of length $3$ in $D$.
  In the following, we will show that for every vertex $u$ in $D_{\zeta(D)}$, there is a $(u,w)$-directed walk of Type~1.
  Take a vertex $u$ in $D_{\zeta(D)}$.
  By $(\star)$, there is a $(u,w)$-directed walk of Type~1 or Type~2.
  If there is a $(u,w)$-directed walk of Type~1, then there is nothing to prove.
  Now suppose that there exists a $(u,w)$-directed walk $Z_u$ of Type~2.
  Then $Z_u$ contains vertices which induce a directed hole $H$ of length $4$ in $D$ and each vertex on $H$ belongs to one of two distinct partite sets of $D$.
  Since the vertices on $C$ belong to three distinct partite sets, there must be an arc between a vertex $x$ on $C$ and a vertex $y$ on $H$.
  Yet, since no directed walk of Type~2 starting at $v$ exists, $(y,x)$ is an arc in $D$.
  Now, a $(u,y)$-section of $Z_u$, the arc $(y,x)$, $C$ starting at $x$, an $(x,w)$-section of $Z_v$ form a $(u,w)$-directed walk of Type~1.
  Therefore we have shown that (i) is true.
  By applying a similar argument, one can show that (ii) is true in the former case.
\end{proof}

Let $D$ be a multipartite tournament
with a directed cycle, the sink elimination index $\zeta(D) \ge 1$, and
the sink sequence $\left(W_0,\ldots,W_{\zeta(D)}\right)$.
Then we say that a vertex in $\bigcup_{i=0}^{j-1}W_i$ is of {\it Type 1} (resp.\ {\it Type 2}) if (i) (resp.\ (ii)) of the above theorem is true. By the same theorem, each vertex in $\bigcup_{i=0}^{j-1}W_i$ is of Type~1 or Type~2.

\begin{Thm}\label{thm:at most 3}
  If a $k$-partite tournament $D$ has a sink and a directed cycle for an integer $k \ge 2$, then the competition period of $D$ is at most three.
  Especially, if $k=2$, then the competition period of $D$ is at most two.
\end{Thm}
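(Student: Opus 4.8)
The plan is to prove the stronger statement that the graph sequence $\bigl(C^m(D)\bigr)_m$ is eventually periodic with period dividing $3$ (dividing $2$ when $k=2$); since the competition period divides every eventual period, this gives the bound. Write $D^{*}:=D_{\zeta(D)}$. As $D$ has a directed cycle, that cycle survives every sink elimination, so $D^{*}$ is a nonempty multipartite tournament with no sinks and $\bigcup_{i=0}^{\zeta(D)-1}W_i\neq\emptyset$. The first step is a reduction valid for all $m\ge\zeta(D)$: by Lemma~\ref{lem:walk-length} every vertex of $\bigcup_{i=0}^{\zeta(D)-1}W_i$ has no out-walk of length $m$, hence is isolated in $C^m(D)$, so all edges lie among vertices of $D^{*}$. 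Since no arc runs from $\bigcup_{i=0}^{\zeta(D)-1}W_i$ into $D^{*}$, a length-$m$ walk from a vertex of $D^{*}$ to a prey in $D^{*}$ never leaves $D^{*}$; thus the edges of $C^m(D)$ whose common prey lies in $D^{*}$ are exactly those of $C^m(D^{*})$, and as $D^{*}$ has no sinks the result of Cho and Kim~\cite{cho2004competition} gives $\mathrm{cperiod}(D^{*})=1$, so this part is constant for large $m$. It remains to control the edges whose common prey $w$ lies in $\bigcup_{i=0}^{\zeta(D)-1}W_i$; for each such $w$ they form a clique on $S^m_w:=\{u\in V(D^{*}): \text{there is a }(u,w)\text{-walk of length }m\}$, so the task is to understand the sets $S^m_w$ as $m$ grows.

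Classify each $w\in\bigcup_{i=0}^{\zeta(D)-1}W_i$ as Type~1 or Type~2 by Theorem~\ref{thm:34cycle}, and split into three cases. Suppose first that some Type~1 vertex $w_1$ and some Type~2 vertex $w_2$ both occur (including the possibility that one vertex is of both types). By the definition of the types, every $u\in V(D^{*})$ has a $(u,w_1)$-walk of Type~1 and a $(u,w_2)$-walk of Type~2, so Lemma~\ref{lem:both34} yields an $N$ with the property that every $u$ reaches every $w$ by a walk of each length $\ell\ge N$. Hence $S^m_w=V(D^{*})$ for all large $m$, the prey-in-sink part of $C^m(D)$ is the complete graph on $V(D^{*})$, and in this case $\mathrm{cperiod}(D)=1$.

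In the remaining cases every $w$ is of Type~1, or every $w$ is of Type~2. The core claim is that for each fixed $u\in V(D^{*})$ and $w$, the length set $L(u,w)=\{m:u\in S^m_w\}$ coincides, above some threshold, with a union of residue classes modulo $3$ in the all-Type-1 case and modulo $2$ in the all-Type-2 case. For the all-Type-1 case, a $(u,w)$-walk of large length is of Type~1 or Type~2 by Lemma~\ref{lem:34cycle1}: if it is of Type~1 then Lemma~\ref{lem:3cycle} puts every larger length congruent to it modulo $3$ into $L(u,w)$; if it is of Type~2 then, since $w$ is of Type~1, $u$ also has a Type~1 $(u,w)$-walk, so Lemma~\ref{lem:both34} forces $u$ to reach $w$ in every large length. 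Consequently a large $m$ lies in $L(u,w)$ only if its whole residue class modulo $3$ eventually does, which is exactly the asserted periodicity; therefore $S^m_w=S^{m+3}_w$ for large $m$ and $\mathrm{cperiod}(D)$ divides $3$. The all-Type-2 case is the same argument with Lemma~\ref{lem:4cycle} (its hypothesis~(i) holds because $w\in\bigcup_{i=0}^{\zeta(D)-1}W_i$) replacing Lemma~\ref{lem:3cycle} and $2$ replacing $3$, giving period dividing $2$. Together with the constant prey-in-$D^{*}$ part, the three cases yield $\mathrm{cperiod}(D)\le 3$. Finally, for $k=2$ the bipartite tournament $D$ contains no directed triangle, so no walk can be of Type~1 and only the all-Type-2 case arises; hence $\mathrm{cperiod}(D)\le 2$.

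I expect the core claim of the third paragraph to be the main obstacle. The generation lemmas (Lemmas~\ref{lem:3cycle} and~\ref{lem:4cycle}) only enlarge $L(u,w)$ upward, so the delicate direction is showing that no stray lengths from an unexpected residue class appear, i.e.\ that $L(u,w)$ is eventually a clean union of residue classes rather than merely closed under adding $3$ (resp.\ $2$). This is precisely where Lemma~\ref{lem:both34} is indispensable: it guarantees that a walk of the wrong type (a Type~2 walk in the all-Type-1 case) cannot contribute a length in an unexpected class without simultaneously forcing $u$ to reach $w$ in every large length, which collapses the residue structure to the full set and keeps the periodicity intact.
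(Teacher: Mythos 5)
Your proof is correct and follows essentially the same route as the paper: the same reduction to vertices of $D_{\zeta(D)}$, the same classification of the vertices of $\bigcup_{i=0}^{\zeta(D)-1}W_i$ via Theorem~\ref{thm:34cycle}, the same case split on which types occur, and the same use of Lemmas~\ref{lem:34cycle1}, \ref{lem:3cycle}, \ref{lem:4cycle}, and \ref{lem:both34} to force eventual periodicity modulo $3$ (resp.\ $2$). The only differences are organizational: you track the reachability sets $S^m_w$ per target $w$ (and invoke the Cho--Kim period-one result for the edges realized inside $D_{\zeta(D)}$), whereas the paper argues per pair of predators, which is equivalent but requires its more cumbersome handling of stray residue classes.
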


\begin{proof}
  Let $D$ be a $k$-partite tournament with a sink and a directed cycle for an integer $k \ge 2$.
  Since $D$ has a sink and a directed cycle, $\zeta(D) \ge 1$.
  Let $\left(W_0,\ldots ,W_{\zeta(D) }\right)$ be the sink sequence of $D$ and $\mathcal{U}_{\zeta(D)}=\bigcup_{i=0}^{\zeta(D)-1}W_i$.
  Since $D$ has a directed cycle, $W_{\zeta(D)} = \emptyset$ by Proposition~\ref{prop:acyclic-digraph}.
  By Proposition~\ref{Prop:notempty}, $C^m(D)$ is not empty for every positive integer $m$.
  If $u \in \mathcal{U}_{\zeta(D)}$, then $u$ has no $m$-step prey for any integer $m \ge \zeta(D)$ by Lemma~\ref{lem:walk-length}.
  Therefore every vertex in $\mathcal{U}_{\zeta(D)}$ is isolated in $C^m(D)$ for any integer $m \ge \zeta(D)$.
   Thus it is sufficient to consider the vertices in  $V(D_{\zeta(D)})$ when determining the competition period of $D$.

   Suppose that there exist vertices $w_1$ and $w_2$ in
   $\mathcal{U}_{\zeta(D)}$ of Type~1 and Type~2, respectively.
   Fix $w \in \mathcal{U}_{\zeta(D)}$.
   Then, by Lemma~\ref{lem:both34}, for each vertex $u$ in $D_{\zeta(D)}$, there exists a $(u,w)$-directed walk of length $\ell$ for each integer $\ell \ge N_u$ for some positive integer $N_u$.
   If we denote by $N$ the maximum of such integers $N_u$, then there exists a $(u,w)$-directed walk of length $\ell$ for each vertex $u$ in $D_{\zeta(D)}$ and each integer $\ell \ge N$.
   Thus, for each integer $m \ge N$, $w$ is an $m$-step prey of each vertex $u$ in $D_{\zeta(D)}$ and so $V(D_{\zeta(D)})$ forms a clique in $C^m(D)$ and $D$ has the competition period one.

   Now it remains to consider the case that every vertex in $\mathcal{U}_{\zeta(D)}$ is only of Type~1 or every vertex in $\mathcal{U}_{\zeta(D)}$ is only of Type~2.
   We first consider the case in which every vertex in $\mathcal{U}_{\zeta(D)}$ is only of Type~1.
Suppose that there are two vertices $u_1$ and $u_2$ in $D_{\zeta(D)}$ which are adjacent in infinitely many step competition graphs of $D$.
   Then there exist a $(u_1,w)$-directed walk $Z_1$ and a $(u_2,w)$-directed walk $Z_2$ of the same length $\ell(u_1,u_2) \ge |V(D)|$ for a vertex $w$ in $D$.
   If $w \in V(D_{\zeta(D)})$, then $u_1$ and $u_2$ are adjacent in $C^m(D)$ for each integer $m \ge \ell(u_1,u_2)$ since $D_{\zeta(D)}$ has no sink.
   Now suppose that $w \in \mathcal{U}_{\zeta(D)}$.
   Since $\ell(u_1,u_2) \ge |V(D)|$, each of $Z_1$ and $Z_2$ is of Type~1 or Type~2 by Lemma~\ref{lem:34cycle1}.
   By the case assumption, there exist a $(u_1,w)$-directed walk $Z_3$ and a $(u_2,w)$-directed walk $Z_4$ of Type~1.
  Then, by considering the following three cases:
  \begin{itemize}
    \item[(a)] both of $Z_1$ and $Z_2$ are of Type~1;
    \item[(b)] both of $Z_1$ and $Z_2$ are of Type~2;
    \item[(c)] $Z_1$ and $Z_2$ are of different types
  \end{itemize}
   and by applying Lemmas~\ref{lem:3cycle} and \ref{lem:both34} to $Z_1, Z_2, Z_3$, or $Z_4$ whichever suitable,
   we may deduce one of the following:
  \begin{itemize}
   \item[(i)] for some positive integer $L(u_1,u_2)$ and any integer $m \ge L(u_1,u_2)$,
   $u_1$ and $u_2$ have an $(\ell(u_1,u_2)+3m)$-step common prey;
   \item[(ii)]  $u_1$ and $u_2$ have an $m$-step common prey for any integer $m \ge N(u_1,u_2)$ for some positive integer $N(u_1,u_2)$.
   \end{itemize}
Suppose (i) happens and $u_1$ and $u_2$ have an $\ell(u_1,u_2)+3m^*+i$ prey for some integer $m^* \ge L(u_1,u_2)$ and some $i$ in $\{1,2\}$.
Then, by repeating the above argument for $\ell(u_1,u_2)+3m^*+i$ instead of $\ell(u_1,u_2)$, we may guarantee the existence of a positive integer $L'(u_1,u_2) \ge L(u_1,u_2)$ such that $u_1$ and $u_2$ have an $(\ell(u_1,u_2)+3m^*+i + 3m)$-step common prey for any integer $m \ge L'(u_1,u_2)$.
Now $u_1$ and $u_2$ have an $(\ell(u_1,u_2)+3m)$-step common prey and an $(\ell(u_1,u_2)+3m +i)$-step common prey for any integer $m \ge L'(u_1,u_2)$.
Even if $u_1$ and $u_2$ have an $(\ell(u_1,u_2)+3m^{**}+j)$-step common prey for some $m^{**} \ge L'(u_1,u_2)$ and some $j \in \{1,2\} \setminus i$, we may apply the same argument to find a positive integer $L''(u_1,u_2) \ge L'(u_1,u_2)$ such that $u_1$ and $u_2$ have an $(\ell(u_1,u_2)+3m)$-step common prey, an $(\ell(u_1,u_2)+3m +i)$-step common prey, and an $(\ell(u_1,u_2)+3m +j)$-step common prey for any integer $m \ge L''(u_1,u_2)$.

We let $M(u_1,u_2)$ stands for one of  $L(u_1,u_2)$, $L'(u_1,u_2)$, $L''(u_1,u_2)$, $N(u_1,u_2)$, whichever appropriate.
Let $L$ be the maximum of $M(u_1,u_2)$ over the pairs $\{u_1,u_2\}$ in $D_{\zeta(D)}$ which are adjacent in infinitely many step competition graphs of $D$ ($L$ exists since there are at most $\binom{|V({D_{\zeta(D)}})|}{2}$ pairs to consider).
   Then it is easy to check that $C^{L+i}(D)=C^{L +3+i}(D)$ for each nonnegative integer $i$, so the competition period of $D$ is $1$ or $3$.
  Using Lemma~\ref{lem:4cycle}, one can show that the competition period of $D$ is at most two by a similar argument if every vertex in $\mathcal{U}_{\zeta(D)}$ is of Type~2, from which the `especially' part follows.
 \end{proof}


\section{Strongly connected multipartite tournaments}\label{sec:primitive}

In this section, we study competition indices of strongly connected multipartite tournaments.
Cho and Kim~\cite{cho2004competition} showed that a digraph without sinks has competition period $1$.
In this section, we show that the competition index of a primitive digraph is at most its exponent.

\begin{Prop}\label{prop:adjacent}
 Let $D$ be a digraph without sinks.
 If two vertices are adjacent in $C^M(D)$ for a positive integer $M$, then they are also adjacent in $C^m(D)$ for any positive integer $m \ge M$.
\end{Prop}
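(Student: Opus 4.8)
The plan is to unwind the definition of the $m$-step competition graph and then exploit the absence of sinks to prolong a common prey walk by a prescribed amount. Suppose $u$ and $v$ are adjacent in $C^M(D)$. By definition, $u$ and $v$ have a common $M$-step prey, so there is a vertex $w$ together with a directed walk $P_u$ of length $M$ from $u$ to $w$ and a directed walk $P_v$ of length $M$ from $v$ to $w$.

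Now fix an integer $m \ge M$. Since $D$ has no sinks, every vertex has out-degree at least one; hence, starting from $w$, one can repeatedly follow an out-arc to obtain a directed walk $w = z_0 \to z_1 \to \cdots \to z_{m-M}$ of length exactly $m-M$. Appending this common tail to each of $P_u$ and $P_v$ yields a directed walk of length $m$ from $u$ to $z_{m-M}$ and a directed walk of length $m$ from $v$ to $z_{m-M}$. Thus $z_{m-M}$ is an $m$-step common prey of $u$ and $v$, and so $u$ and $v$ are adjacent in $C^m(D)$, as desired.

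I expect no real obstacle here beyond bookkeeping: the only place the hypothesis is used is the existence of an arbitrarily long directed walk out of $w$, which is precisely what the no-sink assumption supplies. The sole subtle point worth stating explicitly is that both extended walks must terminate at the \emph{same} vertex; this is guaranteed because the appended tail is identical for the two walks. Alternatively, I would present the argument as an induction on $m$, with base case $m=M$ and inductive step obtained by appending a single out-arc at the current common terminal vertex. This formulation makes the role of the no-sink hypothesis transparent at each step and keeps the concatenation argument elementary.
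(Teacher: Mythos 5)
Your proof is correct and follows essentially the same approach as the paper: the paper extends the common prey by one out-arc at a time (your inductive formulation), while you phrase it as appending a single common tail of length $m-M$, which is the same idea. No issues.
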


\begin{proof}
  Let $x$ and $y$ are adjacent in $C^M(D)$.
  Then $x$ and $y$ have an $M$-step common prey $z$ in $D$.
  Since $D$ has no sinks, $z$ has an out-neighbor $w$ in $D$.
  Then $w$ is an $(M+1)$-step common prey of $x$ and $y$.
  Hence $x$ and $y$ are adjacent in $C^{(M+1)}(D)$.
  We may repeat this argument to show that $x$ and $y$ are adjacent in $C^{(M+2)}(D)$.
  In this way, we may show that $x$ and $y$ are adjacent in $C^m(D)$ for any positive integer $m \ge M$.
\end{proof}
%
%
%
%
%


\begin{Thm}\label{thm:primitive digraph}
Let $D$ be a primitive digraph. Then
\begin{itemize}
  \item[(i)] $C^m(D)$ is a complete graph for each integer $m\ge\exp(D)$;
  \item[(ii)] ${\rm cindex}(D)\le\exp(D)$;
  \item[(iii)] ${\rm cperiod}(D)=1$.
\end{itemize}
\end{Thm}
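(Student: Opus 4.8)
The plan is to prove part~(i) first, since (ii) and (iii) will fall out of it almost immediately. Write $n:=|V(D)|$. The definition of $\exp(D)$ only furnishes, for each ordered pair of vertices $(x,y)$, a directed walk of length \emph{exactly} $\exp(D)$ from $x$ to $y$; the real content of~(i) is to upgrade this to the existence of a directed walk of length exactly $m$ from $x$ to $y$ for \emph{every} integer $m\ge\exp(D)$. Once this is in hand, completeness is free: fixing any vertex $z$, every vertex $u$ admits a $(u,z)$-directed walk of length $m$, so $z$ is an $m$-step common prey of any two vertices $u,v$, and hence $C^m(D)=\mathcal{R}(A^m)$ is the complete graph $K_n$ for each $m\ge\exp(D)$, where $A$ is the adjacency matrix of $D$.

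To carry out the upgrade I would induct on $m$. The base case $m=\exp(D)$ is the definition of the exponent. For the inductive step, suppose that for some $m\ge\exp(D)$ there is a directed walk of length exactly $m$ between every ordered pair of vertices. Given vertices $x,y$, use that $D$ is strongly connected (which is part of the definition of primitivity) to pick an in-neighbor $y'$ of $y$, i.e.\ $(y',y)\in A(D)$; concatenating a length-$m$ walk from $x$ to $y'$ with the arc $(y',y)$ yields a walk of length $m+1$ from $x$ to $y$. Equivalently, in Boolean-matrix language, $A^{\exp(D)}=J_n$ where $J_n$ is the all-ones matrix, and since $D$ is strongly connected every column of $A$ is nonzero, whence $A^{m+1}=A^m A=J_n A=J_n$ and induction gives $A^m=J_n$ for all $m\ge\exp(D)$. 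This is the only step requiring an idea beyond the definitions, and it is the natural place for the argument to go wrong if one forgets that ``exactly $\exp(D)$'' does not by itself give ``exactly $m$''; strong connectivity is precisely what rescues it, so I regard this bootstrapping as the main (though mild) obstacle.

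With~(i) established, part~(ii) is immediate: taking $q=\exp(D)$ and $r=1$ we have $C^{q+i}(D)=K_n=C^{q+1+i}(D)$ for every $i\ge0$, so by the definition of competition index, ${\rm cindex}(D)\le\exp(D)$. For part~(iii), let $q={\rm cindex}(D)\le\exp(D)$ and let $r\ge1$ be a corresponding period, so that $C^{q+i}(D)=C^{q+r+i}(D)$ for all $i\ge0$. The point is that this periodic tail is also eventually constant by~(i), and a sequence that is both periodic from $q$ onward and eventually equal to $K_n$ must in fact equal $K_n$ from $q$ onward: for each $j\ge0$ we have $C^{q+j}(D)=C^{q+j+sr}(D)$ for every $s\ge0$, and choosing $s$ large enough that $q+j+sr\ge\exp(D)$ makes the right-hand side equal to $K_n$, forcing $C^{q+j}(D)=K_n$. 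In particular $C^q(D)=K_n=C^{q+1}(D)$, so the smallest positive integer $p$ with $C^q(D)=C^{q+p}(D)$ is $p=1$; that is, ${\rm cperiod}(D)=1$.
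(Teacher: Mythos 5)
Your proof is correct and follows essentially the same route as the paper: establish that $C^{\exp(D)}(D)$ is complete directly from the definition of the exponent, bootstrap this forward to all $m\ge\exp(D)$, and then read off (ii) and (iii) from the definitions of competition index and period. The only cosmetic difference is in the bootstrapping step: you keep the target vertex fixed and extend walks through an in-neighbor (equivalently $A^{m+1}=A^mA=J_nA=J_n$), whereas the paper moves the common prey to one of its out-neighbors via its Proposition~\ref{prop:adjacent} on digraphs without sinks; both are valid.
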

\begin{proof}
Since we assumed that the underlying graph of each digraph $D$ dealt in this paper is simple, $D$ contains neither a loop nor a directed cycle of length $2$.
Then, by the hypothesis that $D$ is primitive, $|V(D)| \ge 4$.
Now take two distinct vertices  $u$ and $v$ in $D$ and let $t=\exp(D)$.
Then there exist a $(u,w)$-directed walk and a $(v,w)$-directed walk of length $t$ for any vertex $w$ in $D$.
Therefore $u$ and $v$ are adjacent in $C^t(D)$.
Since $u$ and $v$ are arbitrarily chosen, $C^t(D)$ is a complete graph.
Since $D$ is primitive, $D$ does not contain a sink and so, by Proposition~\ref{prop:adjacent}, the statement (i) is true.
By the definitions of cindex and cperiod, the statements (ii) and (iii) are true.
\end{proof}

   Let $D$ be a strongly connected $k$-partite tournament of order $n$ such that the length of a longest directed cycle is $k$ for an integer $4 \le k \le n$ and take two vertices $u$ and $v$ in $D$.
   Bondy~\cite{bondy1976diconnected} showed that for an integer $k\ge3$, a strongly connected $k$-partite tournament contains a directed cycle of length $m$ for each integer $3 \le m \le k$.
   Therefore $D$ is primitive.
   On the other hand, Volkmann~\cite{volkmann2007multipartite} showed that for an integer $k\ge3$, every vertex of any strongly connected $k$-partite tournament with a longest cycle of length $k$ belongs to a directed cycle of length $m$ for each integer $3 \le m \le k$.
   Thus there is a directed cycle $C_{m}$ of length $m$ which contains the vertex $u$ for each integer $3 \le m \le k$.
   Now, since $D$ is strongly connected, there is a $(u,v)$-directed walk $W_1$ of length $l \le n-1$ in $D$.
  Since $l\le n-1$, $F(3,4,\ldots,k) +n -l > F(3,4,\ldots,k)$.
  Therefore, by concatenating the directed cycles, we obtain a closed directed walk $W_2$ of length $F(3,4,\ldots,k) +n -l$.
  Now $W_2 \rightarrow W_1$ is a $(u,v)$-directed walk of length $F(3,4,\ldots,k)+n$.
  Since $u$ and $v$ were arbitrarily chosen, $\exp(D) \le F(3,4,\ldots,k)+n$.
  It is known that $F(3,4)=5$ and $F(3,4,\ldots,k)=2$ for any integer $k \ge 5$.
  Thus
\begin{equation*}\exp(D)\le
  \begin{cases}
    5+n, & \mbox{if $k=4$;} \\
    2+n, & \mbox{otherwise.}
  \end{cases}
  \end{equation*}

Now, by Theorem~\ref{thm:primitive digraph}(ii), we have the following proposition.
\begin{Prop}
  Let $D$ be a strongly connected $k$-partite tournament of order $n$ such that the length of a longest directed cycle is $k$ for an integer $4 \le k \le n$.
  Then
  \begin{equation*}\mathrm{cindex}(D)\le
  \begin{cases}
    5+n, & \mbox{if $k=4$;} \\
    2+n, & \mbox{otherwise.}
  \end{cases}
  \end{equation*}
\end{Prop}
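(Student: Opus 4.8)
The plan is to bound $\mathrm{cindex}(D)$ by $\exp(D)$ via Theorem~\ref{thm:primitive digraph}(ii), and then to bound $\exp(D)$ directly by exhibiting, for an arbitrary ordered pair of vertices, a single common walk length. The first thing I would verify is that $D$ is primitive so that the theorem applies. Since $D$ is strongly connected with $k \ge 3$, Bondy's theorem~\cite{bondy1976diconnected} supplies directed cycles of every length from $3$ to $k$; as $k \ge 4$ we have cycles of lengths $3$ and $4$, and $\gcd(3,4)=1$ forces the greatest common divisor of all cycle lengths to be $1$. Hence $D$ is primitive and $\mathrm{cindex}(D)\le\exp(D)$.

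To estimate the exponent, I would fix two vertices $u$ and $v$ and build a $(u,v)$-directed walk of one prescribed length independent of the pair. By Volkmann's theorem~\cite{volkmann2007multipartite}, every vertex of such a tournament lies on a directed cycle of each length $m$ with $3 \le m \le k$; in particular $u$ does. Strong connectivity yields a $(u,v)$-directed walk $W_1$ of some length $l \le n-1$. The idea is to pad at $u$ with a closed walk so the total length becomes exactly $F(3,4,\ldots,k)+n$. Because $l \le n-1$, the padding length $F(3,4,\ldots,k)+n-l$ strictly exceeds $F(3,4,\ldots,k)$, so by the defining property of the Frobenius number it is a nonnegative integer combination of $3,4,\ldots,k$. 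Concatenating the corresponding cycles through $u$ produces a closed walk $W_2$ at $u$ of exactly that length, and $W_2 \to W_1$ is a $(u,v)$-directed walk of length exactly $F(3,4,\ldots,k)+n$. Since $u$ and $v$ were arbitrary, $\exp(D) \le F(3,4,\ldots,k)+n$.

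Finally I would evaluate the Frobenius numbers: $F(3,4)=5$, while $F(3,4,\ldots,k)=2$ for every $k \ge 5$ (the generators $3$ and $4$ already represent all integers past $2$). This splits the bound into $5+n$ when $k=4$ and $2+n$ otherwise; combining with $\mathrm{cindex}(D) \le \exp(D)$ completes the argument. The one genuinely delicate point is the exact-length walk construction: it is essential that the \emph{same} vertex $u$ lies simultaneously on cycles of all lengths $3,\ldots,k$ (so the padding cycles can be spliced at a common vertex), and that the Frobenius bound certifies representability of the residual gap length $F(3,4,\ldots,k)+n-l$; the remaining computations are routine bookkeeping.
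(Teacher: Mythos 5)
Your argument is correct and follows essentially the same route as the paper's: primitivity via Bondy's cycles of all lengths $3,\ldots,k$, Volkmann's theorem to place cycles of every such length through a common vertex $u$, a Frobenius-number padding of a short $(u,v)$-walk to achieve the exact length $F(3,4,\ldots,k)+n$, and finally Theorem~\ref{thm:primitive digraph}(ii). The only difference is that you make explicit two points the paper leaves implicit (why $\gcd$ of cycle lengths is $1$, and why the residual padding length is representable), which is a welcome clarification rather than a deviation.
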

\section{Tournaments}\label{sec:tournaments}

A tournament with $n$ vertices, denoted by $T_n$, is a digraph resulting from
orienting the edges of a complete graph $K_n$.
The outdegree of a vertex $v_i$ in
$T_n$ is called the score of $v_i$, denoted by $s_i$.
If the vertices of an $n$-tournament
$T_n$ are labeled $v_1,v_2 ,\ldots, v_n$ so that $0\le s_1 \le s_2\le \cdots\le s_n$, then the sequence $(s_1,s_2,\ldots,s_n)$ is called the {\em{score sequence}} of $T_n$.

Let $D$ be a $k$-partite tournament with $n$ vertices. Then clearly $k$ is a positive integer with $k\le n$. One can easily see that $k=n$ if and only if $D$ is a tournament.

\begin{Thm}\cite{Ahn1999m}\label{thm:tournament}
 Let $D$ be a tournament with $n$ vertices and $(s_1,s_2,\ldots,s_n)$ be its score sequence.
 Then $C^m(D)$ is as follows:
 \begin{itemize}
   \item[(i)] $C^2(D)=\begin{cases}
                       K_{n-2} \cup I_{2}, & \mbox{if } s_1=0,s_2=1 ;  \\
                       K_{n-1} \cup I_{1}, & \mbox{if } s_1=0,s_2\ge2;
                       \\
                       \mbox{$K_{n}$ or $K_{n}-P_2$ or $K_{n}-P_3$}, & \mbox{if } s_1=1,s_2\ge2;
                       \\
                       \mbox{$K_{n}-P_3$ or $K_{n}-P_4$}, & \mbox{if } s_1=s_2=1,s_3\ge2;
                       \\
                       K_{n}, & \mbox{if } s_1\ge2;
                     \end{cases}$
   \item[(ii)] $C^3(D)=\begin{cases}
                       K_{n}, & \mbox{if either $s_1=1,s_2\ge2$ or $s_1\ge2$};  \\
                       K_{n}-P_2, & \mbox{if } s_1=s_2=1,s_3\ge2;
                       \\
                       K_n-C_3, & \mbox{if } s_1=s_2=s_3=1;
                     \end{cases}$;
   \item[(iii)] for $m\ge 4$, $C^m(D)=\begin{cases}
                       K_n& \mbox{if $s_1=1,s_2\ge2$ or $s_1=s_2=1,s_3\ge2$ or $s_1\ge2$};  \\
                       K_n-C_3, & \mbox{if } s_1=s_2=s_3=1.
                     \end{cases}$
 \end{itemize}
\end{Thm}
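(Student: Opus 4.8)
The plan is to read $C^m(D)$ off the out-neighborhoods of the few vertices of smallest score, after first showing that every vertex of score at least $2$ already behaves generically. Write $N^+(x)$ for the out-neighbor set of a vertex $x$ and $N_m^+(x)$ for its set of $m$-step prey, so that $x,y$ are adjacent in $C^m(D)$ exactly when $N_m^+(x)\cap N_m^+(y)\neq\emptyset$. I would first record the reductions forced by the score sequence. A vertex of score $0$ is a sink, so it has no $m$-step prey and is isolated in $C^m(D)$ for every $m\ge1$; moreover a tournament has at most one sink, and since every vertex beats the sink, the presence of a sink permits at most one further vertex of score $1$ (two such vertices would beat the sink and each other, forcing out-degree $\ge2$). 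A similar out-degree count shows that three vertices of score $1$ are forced to induce a directed $3$-cycle and to lose to every other vertex, and that a fourth vertex of score $1$ is then impossible. These facts isolate the exceptional vertices (those of score $0$ or $1$) and already show that all non-edges of $C^m(D)$ are incident to them.

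The technical core is a second-neighborhood lemma that I would prove directly: \emph{if $s_u\ge2$ and $s_v\ge2$, then $u$ and $v$ share a common $2$-step prey.} Assume $u\to v$ (the other case is symmetric) and suppose not. Since $v\in N^+(u)$ we have $N^+(v)\subseteq N_2^+(u)$, which is then disjoint from $N_2^+(v)$; but every $y\in N^+(v)$ satisfies $N^+(y)\subseteq N_2^+(v)$, so $N^+(y)\cap N^+(v)=\emptyset$, i.e.\ $y$ beats no vertex of $N^+(v)$. As this holds for all $y\in N^+(v)$, the sub-tournament on $N^+(v)$ is arcless, impossible because $|N^+(v)|=s_v\ge2$. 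Hence the vertices of score $\ge2$ form a clique in $C^2(D)$; and when $D$ has no sink, Proposition~\ref{prop:adjacent} makes the edge sets of the $C^m(D)$ nondecreasing in $m$, so this clique survives for all larger $m$ as well. This single lemma supplies every clique skeleton: $K_n$ when $s_1\ge2$; $K_{n-1}$ together with the isolated sink when $s_1=0$ and $s_2\ge2$; and $K_{n-2}$ together with the isolated sink and the isolated score-$1$ vertex (whose only out-neighbor is the sink, so whose $2$-step prey set is empty) when $s_1=0$ and $s_2=1$.

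It remains to place the non-edges, which are all incident to the score-$1$ vertices. A score-$1$ vertex $a$ has a unique out-neighbor $b$, whence $N_m^+(a)=N_{m-1}^+(b)$, so the adjacencies at $a$ are governed entirely by the prey of $b$. For $C^2$ one compares $N_2^+(a)=N^+(b)$ with the large second-neighborhoods of the other vertices to see that $a$ fails to be adjacent to at most two of them; since all non-exceptional vertices are already mutually adjacent, the deleted edges form a star at $a$, giving $K_n$, $K_n-P_2$, or $K_n-P_3$ when there is one score-$1$ vertex, and, allowing also the edge between two score-$1$ vertices, $K_n-P_3$ or $K_n-P_4$ when there are two. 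Passing from $m=2$ to $m=3$ enlarges $N_m^+(a)=N_{m-1}^+(b)$ enough to restore these edges: the case of one score-$1$ vertex becomes $K_n$, the case of two loses all but a single non-edge and becomes $K_n-P_2$, and for $m\ge4$ that last edge closes, giving $K_n$. Finally, in the rigid case $s_1=s_2=s_3=1$ the three score-$1$ vertices are trapped in their directed $3$-cycle, so each has exactly one $m$-step prey, namely the vertex $m$ steps around the cycle; these three singletons are pairwise disjoint for every $m$, whereas any other vertex may enter the cycle at any of its three vertices and so has all three among its $m$-step prey. Thus those three pairs are the only non-edges and $C^m(D)=K_n-C_3$ for every $m\ge2$, in particular for $C^3$ and for all $m\ge4$.

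The step I expect to be the real obstacle is the tight edge-counting behind the menus ``$K_n$ or $K_n-P_2$ or $K_n-P_3$'' and ``$K_n-P_3$ or $K_n-P_4$'': one must show that each score-$1$ vertex misses \emph{at most} two partners and, crucially, that the deleted edges assemble into a path rather than into a matching such as $2K_2$. This needs a careful parity-and-overlap analysis of the exactly-$2$-step and exactly-$3$-step prey sets of the score-$1$ vertices (in particular, whether the two score-$1$ vertices point into each other's out-neighborhoods). Once this bookkeeping and the second-neighborhood lemma are established, the isolated-sink, clique, and trapped-$3$-cycle components fit together routinely.
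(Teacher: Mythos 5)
The paper does not actually prove Theorem~\ref{thm:tournament}; it is imported verbatim from Ahn~\cite{Ahn1999m}, so there is no in-paper argument to measure yours against. Judged on its own, your scaffolding is sound and several pieces are genuinely proved: the second-neighborhood lemma (two vertices of score at least $2$ share a $2$-step prey, since otherwise $N^+(v)$ would induce an arcless sub-tournament on at least two vertices) is correct, as are the structural facts about score-$0$ and score-$1$ vertices, and the cases $s_1\ge2$, $s_1=0$, and $s_1=s_2=s_3=1$ are complete. However, there is a genuine gap, and you have located it yourself: the exact menus of graphs in the cases $s_1=1,\,s_2\ge2$ and $s_1=s_2=1,\,s_3\ge2$, for both $C^2(D)$ and $C^3(D)$, are asserted rather than derived. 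Since those menus are the substantive content of parts (i)--(iii) beyond the easy cases, what you have is a correct reduction plus an outline, not a proof.

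To indicate that the missing counts are real work but within reach of your setup: for a single score-$1$ vertex $a$ with unique out-neighbor $b$, every $u$ with $u\to b$ satisfies $N^+(b)\subseteq N_2^+(u)$ and hence meets $N_2^+(a)=N^+(b)$, and $b$ itself is adjacent to $a$ because $N^+(b)$ contains an arc; so all non-neighbors of $a$ lie in $N^+(b)$. If $u_1,u_2$ are two such non-neighbors with $u_1\to u_2$, then $N^+(u_2)\subseteq N_2^+(u_1)$ is disjoint from $N^+(b)$, forcing $u_2$ to be the unique sink of the sub-tournament on $N^+(b)$; a third non-neighbor would force a second distinct vertex to be that same unique sink, a contradiction. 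This is the kind of argument you still owe for ``at most two non-edges,'' and an analogous (longer) analysis is needed when there are two score-$1$ vertices to rule out the non-edges forming $2K_2$ instead of a path and to locate the forced non-edges $a_1a_2$ and $a_1b$. The $C^3$ claims are also not routine: when $b\to u$ one must manufacture a $3$-step walk from $u$ into $N_2^+(b)$, and the fact that $K_n-P_2$ genuinely survives at $m=3$ in the two-score-$1$-vertex case shows the edges do not simply ``close up'' by monotonicity (Proposition~\ref{prop:adjacent} only gives that existing edges persist, not that new ones appear). Until these counts are carried out, the central cases of the theorem remain unproved.
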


Theorem~\ref{thm:tournament} left out the characterization of the $m$-step competition graph of a tournament with sinks for $m \ge 3$.
In this section, we take care of it to eventually compute the competition period and competition index of a tournament with sinks.

\begin{Prop}\label{prop:onesink}
  There is at most one sink in any tournament.
\end{Prop}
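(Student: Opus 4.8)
The plan is to prove the contrapositive-style counting fact directly from the definition of a tournament. A tournament on $n$ vertices has exactly one arc between each pair of distinct vertices, so a \emph{sink} is a vertex of outdegree zero, i.e.\ a vertex $v$ such that $(u,v) \in A(T_n)$ for every other vertex $u$. First I would suppose, to the contrary, that a tournament $T_n$ contains two distinct sinks $u$ and $v$. Because $T_n$ is a tournament, exactly one of $(u,v)$ and $(v,u)$ is an arc of $T_n$.

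The key observation is that either orientation immediately contradicts the assumption that both vertices are sinks: if $(u,v) \in A(T_n)$, then $v$ is a prey of $u$, so $u$ has positive outdegree and is not a sink; symmetrically, if $(v,u) \in A(T_n)$, then $v$ is not a sink. In either case we reach a contradiction, so no tournament can have two distinct sinks. Hence there is at most one sink in any tournament.

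There is essentially no obstacle here — the statement is an elementary consequence of the fact that every pair of vertices in a tournament is joined by an arc, together with the definition of a sink as a vertex of outdegree zero. The only thing to be careful about is phrasing: a tournament on a single vertex or with no arcs between a pair does not arise, since by definition $T_n$ orients a complete graph, so each pair is adjacent. I would therefore keep the argument to the short case analysis above rather than invoking any of the earlier sink-sequence machinery, which is overkill for a statement this simple.
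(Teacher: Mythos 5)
Your proof is correct and is essentially identical to the paper's: both argue by contradiction that two distinct sinks must be joined by an arc, giving one of them positive outdegree. No further comment is needed.
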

\begin{proof}
  Suppose to the contrary that there is a tournament $D$ with at least two sinks.
Let $x$ and $y$ be sinks of $D$.
Since $D$ is a tournament, one of $(x,y)$ or $(y,x)$ must be in $A(D)$, which is a contradiction.
Hence there is at most one sink in any tournament.
\end{proof}

\begin{Cor}
 Let $n$ be an integer with $n\ge3$. Then a tournament of order $n$ is acyclic if and only if its sink elimination index is $n-1$.
\end{Cor}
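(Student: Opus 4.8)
The plan is to prove the biconditional by combining the two relevant facts about tournaments established earlier in this section, namely Proposition~\ref{prop:onesink} (at most one sink in any tournament) and the characterization of acyclicity in Proposition~\ref{prop:acyclic-digraph}, together with the bounds $0 \le \zeta(D) \le |V(D)|-1$ coming from the definition of the sink elimination index. The ``if'' direction is the easier one: assuming $\zeta(D) = n-1$, I would invoke the definition of $\zeta(D)$ directly. Since the sink sequence $(W_0, W_1, \ldots, W_{n-1})$ has $n$ nonempty terms and these are pairwise disjoint subsets of the $n$-vertex set $V(D)$, each $W_i$ is a singleton, and in particular $W_{\zeta(D)} = W_{n-1}$ is a nonempty singleton with $W_{n-1} = V(D_{n-1})$; by Proposition~\ref{prop:acyclic-digraph}(ii) this forces $D$ to be acyclic.

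For the ``only if'' direction, I would assume $D$ is an acyclic tournament of order $n$ and argue that every term of the sink sequence is a singleton, forcing $\zeta(D) = n-1$. The key observation is that each digraph $D_i$ in the sequence associated with the sink sequence is itself a tournament (an induced subdigraph of a tournament is a tournament) and, because $D$ is acyclic, each $D_i$ is acyclic and hence nonempty must contain a sink. By Proposition~\ref{prop:onesink} applied to $D_i$, the set $W_i$ of sinks in $D_i$ has at most one element; since acyclicity guarantees via Proposition~\ref{prop:acyclic-digraph}(iii) that $\bigcup_{i=0}^{\zeta(D)} W_i = V(D)$ and each $W_i$ is nonempty for $i < \zeta(D)$, every $W_i$ is exactly a singleton. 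Counting vertices then gives $n = \sum_{i=0}^{\zeta(D)} |W_i| = \zeta(D)+1$, whence $\zeta(D) = n-1$.

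The main subtlety to handle carefully is the terminal term $W_{\zeta(D)}$. By the definition of the sink elimination index, $W_{\zeta(D)}$ is special in that the process stops either because $W_{\zeta(D)} = V(D_{\zeta(D)})$ or because $W_{\zeta(D)} = \emptyset$; the intermediate terms $W_0, \ldots, W_{\zeta(D)-1}$ are all nonempty proper subsets of sinks. Since acyclicity rules out $W_{\zeta(D)} = \emptyset$ by Proposition~\ref{prop:acyclic-digraph}(ii), the terminal term also coincides with the full sink set $V(D_{\zeta(D)})$ of a tournament, so Proposition~\ref{prop:onesink} again bounds its size by one. I expect this bookkeeping about which terms are automatically nonempty to be the only place requiring care; once the singleton property of every $W_i$ is established, the vertex count is immediate. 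The hypothesis $n \ge 3$ is not strictly needed for the counting argument but presumably guards against degenerate small cases where the notions of directed cycle and acyclicity trivialize.
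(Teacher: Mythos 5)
Your proof is correct and is precisely the argument the paper intends: the corollary is stated without proof as an immediate consequence of Proposition~\ref{prop:onesink} (a tournament has at most one sink, applied to each $D_i$) together with Proposition~\ref{prop:acyclic-digraph}, and your two directions supply exactly that reasoning. The only line worth adding is in the ``if'' direction, where the nonemptiness of the terminal term $W_{n-1}$ is not automatic from the definition of $\zeta(D)$; it follows because removing the $n-1$ pairwise disjoint nonempty sets $W_0,\ldots,W_{n-2}$ from the $n$-element vertex set forces $|V(D_{n-1})|=1$, and the unique vertex of a one-vertex tournament is a sink, so $W_{n-1}=V(D_{n-1})\neq\emptyset$.
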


\begin{Thm}\label{thm:tournament2}
  Let $D$ be a tournament of order $n \ge 2$ with a sink and let $\zeta(D)$ be the sink elimination index of $D$.
  Then, for a positive integer $m$, the following are true:
  \begin{itemize}
    \item[(i)] $1 \le \zeta(D) \le n-1$ with $\zeta(D) \neq n-2$.
        Moreover, for each integer $i$ satisfying $1 \le i \le n-1$ and $i \neq n-2$, there exists a tournament with the sink elimination index $i$;
    \item[(ii)] if $1 \le m < \zeta(D)$, then $C^m(D)$ is the union of the complete graph with vertex set $V(D_{m-1}) \setminus W_{m-1}$ and the empty graph with the vertex set $\bigcup_{i=0}^{m-1}{W_i}$;
  \item[(iii)] if $m \ge \zeta(D)$, then $C^m(D)$ is the union of the complete graph with vertex set $V(D_{\zeta(D)})$ and the empty graph with the vertex set $\bigcup_{i=0}^{\zeta(D)-1}{W_i}$;
    \item[(iv)] $\mathrm{cperiod}(D)=1$;
    \item[(v)] $\mathrm{cindex}(D)=\zeta(D)$.
  \end{itemize}
\end{Thm}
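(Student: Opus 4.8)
The plan is to first pin down the shape of the sink sequence of $D$ and then read off everything from the reachability of the global sink. Since $D$ is a tournament, Proposition~\ref{prop:onesink} shows that each $D_i$ has at most one sink, so every $W_i$ with $i<\zeta(D)$ is a singleton; write $W_i=\{w_i\}$. Because $w_0$ is a sink of the whole tournament, every vertex $v\neq w_0$ satisfies $(v,w_0)\in A(D)$; more generally $w_j$ is a sink of $D_j$, so the out-neighbors of $w_j$ in $D$ are exactly $\{w_0,\dots,w_{j-1}\}$, and following a strictly decreasing chain of indices shows that $w_j$ admits a directed walk to $w_0$ of every length $1,\dots,j$ (and none longer, in accordance with Lemma~\ref{lem:walk-length}). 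Two cases then arise, separated by Proposition~\ref{prop:acyclic-digraph}: either $W_{\zeta(D)}=V(D_{\zeta(D)})$ is a single vertex and $D$ is acyclic (forcing $\zeta(D)=n-1$), or $W_{\zeta(D)}=\emptyset$, i.e.\ $D_{\zeta(D)}$ is a sink-free tournament, which must then have at least three vertices since a tournament on one or two vertices always has a sink.

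For (i), the bounds $1\le\zeta(D)\le n-1$ are immediate, since $D$ has a sink and $n\ge 2$. The exclusion $\zeta(D)\neq n-2$ is the first point needing care: in the acyclic case $\zeta(D)=n-1$, while in the sink-free-terminal case $|V(D_{\zeta(D)})|\ge 3$ forces $\zeta(D)\le n-3$, so the value $n-2$ never occurs. For realizability I would exhibit, for $i=n-1$, the transitive tournament (whose sink sequence peels off one vertex at a time), and for $1\le i\le n-3$, a tournament on vertices $u_1,\dots,u_i$ together with a sink-free tournament $S$ on the remaining $n-i\ge 3$ vertices, with $u_j\to u_{j'}$ whenever $j>j'$ and every vertex of $S$ beating every $u_j$; peeling then removes $u_1,\dots,u_i$ in order and halts at $D_i=S$, so $\zeta(D)=i$.

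The heart of the argument is the reachability observation driving (ii) and (iii). For any $u\in V(D_{\zeta(D)})$ in the non-acyclic case, since $D_{\zeta(D)}$ has no sink I can extend a walk inside $D_{\zeta(D)}$ to any length $m-1$ and then step to $w_0$, producing a $(u,w_0)$-walk of every length $m\ge 1$; together with the fact that each $w_j$ reaches $w_0$ in every length $1,\dots,j$, each vertex of $V(D_m)=V(D_{m-1})\setminus W_{m-1}$ (namely the interior vertices of $D_{\zeta(D)}$ and the $w_j$ with $j\ge m$) admits a walk to $w_0$ of length exactly $m$, so $V(D_m)$ is a clique in $C^m(D)$ with common prey $w_0$. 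On the other hand, every $w_i$ with $i<m$ has no directed walk of length $m$ by Lemma~\ref{lem:walk-length}, hence is isolated; since these account for all remaining vertices, $C^m(D)$ is exactly a clique on $V(D_m)$ together with isolated vertices. Specializing to $1\le m<\zeta(D)$ gives (ii), and to $m\ge\zeta(D)$ gives (iii), where the clique is $V(D_{\zeta(D)})$, a single vertex (hence the empty graph) in the acyclic case.

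Finally (iv) and (v) fall out of this description. Statement (iii) shows $C^m(D)$ is the same graph for all $m\ge\zeta(D)$, so the graph sequence is eventually constant and $\mathrm{cperiod}(D)=1$. For the index, comparing (ii) at $m=\zeta(D)-1$ with (iii) at $m=\zeta(D)$ shows the clique shrinks from $V(D_{\zeta(D)-1})$ to $V(D_{\zeta(D)})$ by exactly the vertex $w_{\zeta(D)-1}$, so $C^{\zeta(D)-1}(D)\neq C^{\zeta(D)}(D)$ and the stabilization cannot begin earlier; hence $\mathrm{cindex}(D)=\zeta(D)$, the case $\zeta(D)=1$ being immediate since the sequence is already constant from $m=1$. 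I expect the main obstacle to be not the clique/isolation bookkeeping but the two structural subtleties above: proving the gap $\zeta(D)\neq n-2$, and recognizing that $D_{\zeta(D)}$ need not be strongly connected — which is precisely why the argument routes all common prey through the global sink $w_0$ rather than through walks internal to $D_{\zeta(D)}$.
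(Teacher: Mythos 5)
Your proposal is correct and follows essentially the same route as the paper: singleton $W_i$'s via the at-most-one-sink observation, the dichotomy between the acyclic case ($\zeta(D)=n-1$) and a sink-free terminal tournament on at least three vertices (which yields $\zeta(D)\le n-3$), all common prey routed through the global sink $w_0$, and isolation of $\bigcup_{i=0}^{m-1}W_i$ from Lemma~\ref{lem:walk-length}. The only cosmetic differences are your realizability construction (gluing a transitive chain onto an arbitrary sink-free tournament rather than reversing one arc of the transitive tournament) and reaching $w_0$ in one step from $D_{\zeta(D)}$ instead of descending the full chain, both of which are equally valid.
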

\begin{proof}
  Let $(W_0,\ldots,W_{\zeta(D)})$ be the sink sequence of $D$.
  By the hypothesis that $D$ has a sink, $W_0 \neq \emptyset$.
  Then $|W_0| =1$ by Proposition~\ref{prop:onesink}.
  Since $n \ge 2$, $W_0 \neq V(D)$.
  Thus $\zeta(D) \neq 0$ and so $1 \le \zeta(D) \le n-1$.
  Now $|W_i| =1$ for each integer $0 \le i \le \zeta(D)-1$ by Proposition~\ref{prop:onesink}.

  Suppose $\zeta(D)=n-2$.
  Then $D_{\zeta(D)}$ is a tournament of order $2$, which has a sink and a non-sink, and we reach a contradiction.
  Therefore $\zeta(D) \neq n-2$.

   To show the `moreover' part of the statement (i), fix $i$ such that $1 \le i \le n-1$ and $i \neq n-2$.
   We consider a tournament $D'$ defined by
 $V(D')=\{v_1,v_2,\ldots,v_n\}$ and $A(D')=\{(v_l,v_k) \mid 1 \le k < l \le n\} \setminus \{(v_n,v_{i+1})\} \cup \{(v_{i+1},v_{n})\}$.
 It is easy to check that $W_{j-1}=\{v_{j}\}$ for each $j = 1, \ldots,  i$ and so $\zeta(D') \ge i$.
 On the other hand, there exists a directed cycle $v_{n}\to  v_{n-1} \to v_{n-2} \cdots \to v_{i+1} \to v_n$ in $D'$ and so we may conclude that $\zeta(D') = i$.
Thus the statement (i) is true.

To show the statements (ii) and (iii), we denote the vertices of $D$ as  $v_1,v_2,\ldots,v_n$ so that $W_{i} = \{v_{i+1}\}$ for each integer $0 \le i \le \zeta(D)-1$.
 Then, for each integer $1 \le i \le \zeta(D)$, the length of a longest directed walk with an initial vertex $v_i$ is $i-1$ by Lemma~\ref{lem:walk-length}.
Take a positive integer $m$.
 Suppose $1 \le m < \zeta(D)$.
  Then, for every vertex $v$ in $V(D_{m-1}) \setminus W_{m-1}$, there is an arc from $v$ to $v_m$.
  Concatenating this arc with the directed walk of length $m-1$ from $v_m$ to $v_1$, every vertex in $V(D_{m-1}) \setminus W_{m-1}$ has $v_1$ as an $m$-step prey.
   Since $V(D_{m-1}) \setminus W_{m-1} = V(D) \setminus \bigcup_{i=0}^{m-1}{W_i}$, $V(D_{m-1}) \setminus W_{m-1}$ forms a clique of size $|V(D) \setminus \bigcup_{i=0}^{m-1}{W_i}| = n-m$ in $C^m(D)$.
   Since no vertex in $\bigcup_{i=0}^{m-1}{W_i}$ has an $m$-step prey in $D$ by Lemma~\ref{lem:walk-length}, the vertices in $\bigcup_{i=0}^{m-1}{W_i}$ are isolated in $C^m(D)$.
   Thus the statement (ii) is true.

   Suppose $m \ge \zeta(D)$.
   By (i), $1 \le \zeta(D) \le n-1$ and $\zeta(D) \neq n-2$.
 If $\zeta(D)=n-1$, then there is at most one vertex which has an $m$-step prey in $D$ and so $C^m(D)=I_{n}=K_{1} \cup I_{n-1}$.
 Suppose $1 \le \zeta(D) \le n-3$.
    Then $D_{\zeta(D)}$ is a tournament of order at least $3$ and so $W_{\zeta(D)} \neq V(D_{\zeta(D)})$ by Proposition~\ref{prop:onesink}.
  By the definition of $\zeta(D)$, $W_{\zeta(D)}=\emptyset$, that is, $D_{\zeta(D)}$ is a tournament without sinks.
  Thus, for each vertex $v \in D_{\zeta(D)}$, there exists a directed walk $X_v$ in $D_{\zeta(D)}$ of length $k$ from $v$ to a vertex in $D_{\zeta(D)}$ where $k=m-\zeta(D)$.
  Since $D_{\zeta(D)}$ is a subdigraph of $D$, $X_v$ is a directed walk of length $k$ in $D$.
  By the definition of sink sequence, there is a directed walk of length $\zeta(D)$ from each vertex in  $D_{\zeta(D)}$ to $v_1$ in $D$.
  By concatenating those two directed walks, we obtain a $(v,v_1)$-directed walk of length $m$ for each vertex $v \in D_{\zeta(D)}$ in $D$.
  Hence the vertices in $D_{\zeta(D)}$ forms a clique of size $n-\zeta(D)$ in $C^m(D)$.
  Since no vertex in $\bigcup_{i=0}^{\zeta(D)-1}{W_i}$ has an $m$-step prey in $D$ by Lemma~\ref{lem:walk-length}, the vertices in $\bigcup_{i=0}^{\zeta(D)-1}{W_i}$ are isolated in $C^m(D)$.
  Therefore the statement (iii) is valid.
  Thus the statements (iv) and (v) immediately follow from (ii) and (iii).
\end{proof}

\section{Acknowledgement}
This research was supported by the National Research Foundation of Korea(NRF)  (NRF-2017R1E1A1A03070489 and 2016R1A5A1008055) funded by the Korea government(MSIP).


\end{document}